\newenvironment{algorithm-hbox}{\hbadness=10000\begin{algorithm}}{\end{algorithm}}
\theoremstyle{plain}
\newtheorem{theorem}{Theorem}
\newtheorem{lemma}[theorem]{Lemma}
\newtheorem{corollary}[theorem]{Corollary}
\newtheorem{prop}[theorem]{Proposition}
\numberwithin{equation}{section}
\renewcommand{\epsilon}{\varepsilon}
\renewcommand{\leq}{\leqslant}
\renewcommand{\geq}{\geqslant}
\newcommand{\Nat}{\mathbb{N}}
\newcommand{\evS}{\mathcal{A}}
\newcommand{\vS}{\mathcal{X}}
\newcommand{\vars}{\mathrm{vbl}}
\newcommand{\prob}{\mathrm{Pr}}
\begin{document}
\title{Improved algorithms for colorings of simple hypergraphs and applications}
\author{Jakub Kozik, Dmitry Shabanov}
\address{Theoretical Computer Science Department, Faculty of Mathematics and Computer Science, Jagiellonian University, Krak\'{o}w, Poland}
\email{Jakub.Kozik@uj.edu.pl}
\address{Department of probability Theory, Faculty of Mechanics and Mathematics, Lomonosov Moscow State University, Moscow, Russia}
\email{dm.shabanov.msu@gmail.com}

\thanks{Research of J.\ Kozik  was supported by Polish National
Science Center within grant 2011/01/D/ST1/04412.}

\begin{abstract}
The paper deals with extremal problems concerning colorings of hypergraphs.
By using a random recoloring algorithm we show that any $n$-uniform simple (i.e. every two distinct edges share at most one vertex) hypergraph $H$ with maximum edge degree at most
\[
  \Delta(H)\leq c\cdot nr^{n-1},
\]
is $r$-colorable, where $c>0$ is an absolute constant.

As an application of our proof technique we establish a new lower bound for Van der Waerden number $W(n,r)$, the minimum $N$ such that in any $r$-coloring of the set $\{1,\ldots,N\}$ there exists a monochromatic arithmetic progression of length $n$.
We show that
\[
  W(n,r)>c\cdot r^{n-1},
\]
for some absolute constant $c>0$.

\end{abstract}

\maketitle

\newcommand{\dt}{d_\tau}

\newcommand{\de}{\mathcal{D}}
\newcommand{\ct}{\mathcal{CT}}
\newcommand{\dct}{\mathcal{DT}}
\newcommand{\ec}{\mathcal{EC}}

\section{Introduction}


A hypergraph is a pair $(V,E)$ where $V$ is a set, called a \emph{vertex set} of the hypergraph and $E$ is a family of subsets of $V$, whose elements are called \emph{edges} of the hypergraph.
A hypergraph is $n$-\emph{uniform} if every of its edges contains exactly $n$ vertices.
In a fixed hypergraph, the \emph{degree of a vertex} $v$ is the number of edges containing $v$,
the \emph{degree of an edge} $e$ is the number of other edges of the hypergraph which have nonempty intersection with $e$.
The maximum edge degree of hypergraph $H$ is denoted by $\Delta(H)$.

An $r$-coloring of hypergraph $H=(V,E)$ is a mapping from the vertex set $V$ to the set of $r$ colors, $\{0,\ldots,r-1\}$.
A coloring of $H$ is called \emph{proper} if it does not create monochromatic edges (i.e. every edge contains at least two vertices which receives different colors).
A hypergraph is said to be $r$-\emph{colorable} if there exists a proper $r$-coloring of that hypergraph.
Finally, the chromatic number of hypergraph $H$ is the minimum $r$ such that $H$ is $r$-colorable.

\bigskip
In general, the problem of deciding whether given uniform hypergraph is $r$-colorable is NP-complete.
So, it is natural to investigate easily checked conditions which guarantee $r$-colorability.
In the current paper we concentrate on establishing such conditions, for simple hypergraphs and related systems, in terms of restrictions for the maximum edge degree.

\bigskip
The first quantitative relation between the chromatic number and the maximum edge degree in uniform hypergraph was obtained by Erd\H{o}s and Lov\'asz in their classical paper \cite{ErdLov}.
They proved that if $H$ is an $n$-uniform hypergraph and
\begin{equation}\label{bound:erdlov}
  \Delta(H)\leq \frac 14r^{n-1},
\end{equation}
then $H$ is $r$-colorable.
The result was derived by using Local Lemma, which first appeared in the same paper and since that time became one of the main tools of extremal and probabilistic combinatorics.

\bigskip
However the bound \eqref{bound:erdlov} was not tight.
The restriction on the maximum edge degree was successively improved in a series of papers.
We mention only the best currently known result, the reader is referred to the survey \cite{RaigShab} for the detailed history.

In connection with the classical problem related to Property B, Radhakrishnan and Srinivasan \cite{RadhSrin} proved that any $n$-uniform hypergraph $H$ with
\begin{equation}\label{bound:radhsrin}
  \Delta(H)\leq 0,17 \sqrt{\frac n{\ln n}}2^{n-1}
\end{equation}
is $2$-colorable.
Their proof was based on a clever random recoloring procedure with application of Local Lemma.

Recently a  generalization of the result \eqref{bound:radhsrin} was found by Cherkashin and Kozik \cite{CherKozik}.
They showed that, for a fixed $r\geq 2$ there exists a positive constant $c(r)$ such that for all large enough $n>n_0(r)$, if $H$ is an $n$-uniform hypergraph and
\begin{equation}\label{bound:cherkozik}
  \Delta(H)\leq c(r)\left(\frac{n}{\ln n}\right)^{\frac {r-1}{r}}r^{n-1},
\end{equation}
then $H$ is $r$-colorable.
In the case of two colors the proof from \cite{CherKozik} gives the same result as in \eqref{bound:radhsrin}, but it is shorter and easier.

\bigskip
Extremal problems concerning colorings of hypergraphs are closely connected to the classical questions of Ramsey theory (e.g. to find quantitative bounds in Ramsey Theorem or Van der Waerden Theorem).
The hypergraphs appearing in these challenging problems are very close to be simple.
Recall that hypergraph $(V,E)$ is called \emph{simple} if every two of its distinct edges share at most one vertex, i.e. for any $e,f\in E$, $e\ne f$,
\[
  |e\cap f|\leq 1.
\]
It is natural to expect that it is easier to color simple hypergraphs and that the bounds \eqref{bound:erdlov}--\eqref{bound:cherkozik} can be improved.

The first Erd\H{o}s--Lov\'asz--type result for simple hypergraphs was obtained by Szab\'o \cite{Szabo}.
He proved, that for arbitrary $\varepsilon>0$, there exists $n_0=n_0(\varepsilon)$ such that for any $n>n_0$ and any $n$-uniform simple hypergraph $H$ with maximum vertex degree at most $2^n n^{-\varepsilon}$, the chromatic number of $H$ equals two.
This theorem was extended to the analogous statements concerning edge degrees and to the arbitrary number of colors by Kostochka and Kumbhat \cite{KostKumb}.
They proved that for arbitrary $\varepsilon>0$ and $r\geq 2$, there exists $n_0=n_0(\varepsilon,r)$ such that if $n>n_0$ and an $n$-uniform simple hypergraph $H$ satifies
\begin{equation}\label{bound:kk}
  \Delta(H)\leq n^{1-\varepsilon}r^{n-1},
\end{equation}
then $H$ is $r$-colorable.
Since $\varepsilon>0$ is arbitrary in \eqref{bound:kk} then, of course, it can be replaced by some infinitesimal function $\varepsilon=\varepsilon(n)>0$, for which  $\varepsilon(n)\to 0$ as $n\to\infty$.
Few paper were devoted to the problem of estimating the order of its growth.
Kostochka nad Kumbhat themselves stated that $\varepsilon(n)=\Theta(\log\log\log n/\log\log n)$ (all logarithms are natural).
In \cite{Shab} Shabanov showed that one can take $\varepsilon(n)=\Theta(\sqrt{\log\log n/\log n})$.
Recently further progress was made independently by Kozik \cite{Kozik} and by Kupavskii and Shabanov \cite{KupShab}, who proved respectively that
bounds
\[
  \Delta(H)\leq c\, \frac {n}{\log n}r^{n-1} \quad \text{and} \quad   \Delta(H)\leq c\, \frac {n \log\log n}{\log n}r^{n-1}
\]
guarantee $r$-colorability of simple $n$-uniform hypergraphs.

\bigskip
The main result of the current paper completely removes factor $n^{-\varepsilon}$ from the bound \eqref{bound:kk}.

\begin{theorem}
\label{thm:simp}
There exists a positive constant $\alpha$ such for every $r\geq 2$, and every $n\geq 3$, any simple $n$-uniform hypergraph with maximum edge degree at most $\alpha\cdot n\; r^{n-1}$ is $r$-colorable.
\end{theorem}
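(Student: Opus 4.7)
The plan is to analyze a random-recoloring procedure in the style of Radhakrishnan--Srinivasan and Cherkashin--Kozik, while extracting an additional factor of roughly $r^{n-1}$ from the simplicity hypothesis to drive the bound down to $\alpha \cdot n\, r^{n-1}$.

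For the setup, I would assign each vertex $v$ an independent uniform $X_v\in[0,1]$ and partition $[0,1]$ into $r$ equal ``color slots.'' Within each slot, narrow sub-intervals adjacent to the boundaries (of combined length $p$, for a small constant $p>0$) are designated as \emph{buffers}. The preliminary color $c_0(v)$ is the index of the slot containing $X_v$; a vertex whose $X_v$ falls in a buffer is called \emph{flippable}, with an \emph{alternate color} $c_1(v)$ equal to the neighboring slot's index. I then run the following recoloring: for each edge that is monochromatic under $c_0$, change the color of one of its flippable vertices from $c_0(v)$ to $c_1(v)$, choosing according to a canonical priority (for instance an a priori random ordering of the vertices), to produce a final coloring $c$.

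The Lov\'asz Local Lemma will be applied to the family of bad events $A_e=\{e \text{ is monochromatic under }c\}$. I split $A_e$ into two sources: (i) $e$ is monochromatic under $c_0$ and no vertex of $e$ lies in a buffer whose alternate color differs from $e$'s monochromatic value, and (ii) $e$ is \emph{rendered} monochromatic by flips triggered on vertices $v\in e$ while repairing other edges. For (i), a direct computation gives probability at most $r\cdot\bigl(\tfrac{1-p}{r}\bigr)^{n}=(1-p)^{n}\,r^{1-n}$, which is exponentially small for any fixed $p$. For (ii), simplicity is essential: if a neighboring edge $f$ forces the flip of $v\in e$, then $|e\cap f|\le 1$ implies $v$ is the unique shared vertex, so the remaining $n-1$ vertices of $e$ must already agree in color for the flip to create a monochromatic $e$. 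This contributes a factor $r^{-(n-1)}$ per neighbor $f$; summing over the at most $\Delta(H)=\alpha n r^{n-1}$ neighbors of $e$ and the at most $n$ vertices involved yields a contribution of order $\alpha p\,n^{2}\cdot r^{-n}$, which is dominated by the bound for (i).

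With $\Pr[A_e]=O\bigl((1-p)^{n}\,r^{1-n}\bigr)$ in hand, the symmetric LLL inequality $e\cdot\Pr[A_e]\cdot(D+1)\le 1$ must be verified for the dependency degree $D$, which is bounded by the number of edges within distance two of $e$ in the intersection graph; simplicity again gives $D=O(n\Delta(H))=O(\alpha n^{2}r^{n-1})$. The inequality therefore reduces to $O\bigl(\alpha n^{2}(1-p)^{n}\bigr)\le 1$, which holds for every $n\ge 3$ upon choosing $\alpha$ small and $p$ suitably. The step I expect to be hardest is the rigorous handling of \emph{cascades} in (ii): a single flip performed to repair one edge could in principle force further flips elsewhere, and these must be accounted for without inflating $\Pr[A_e]$. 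I expect to manage this either by iterating the recoloring a bounded number of times and bookkeeping the induced events via witness-tree/entropy-compression style arguments, or by designing a canonical one-shot recoloring whose analysis factors cleanly along the intersection graph so that the $r^{-(n-1)}$ gain from simplicity is preserved at every stage.
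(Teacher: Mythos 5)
Your proposal captures the right general flavor (random recoloring plus LLL) but leaves the genuine difficulty unresolved, and you say so yourself at the end. Let me be specific about where the gaps are and how the paper closes them.

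First, the \emph{one-shot} recoloring does not work as stated. A flip performed to repair one initially monochromatic edge can \emph{create} a monochromatic edge that was not monochromatic under $c_0$, and this newly monochromatic edge may in turn force further flips. The algorithm in the paper is explicitly iterative (it loops while a monochromatic edge with a free, not-yet-recolored, first vertex exists), and the entire analysis is built around tracking the resulting chains of \emph{blame}: a recolored vertex blames the edge that was monochromatic at the moment of its flip, and this blame relation is shown to be acyclic and is packaged into rooted labelled \emph{h-trees}. The key structural proposition is that if the output coloring is improper, a \emph{complete} h-tree must exist. Your case~(ii) is precisely where these cascades live, and ``I expect to manage this \dots by witness-tree/entropy-compression style arguments'' is naming the missing lemma rather than proving it.

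Second, the symmetric LLL cannot absorb the cascades. You estimate the dependency degree $D$ of $A_e$ as $O(n\Delta(H))$ by looking at edges within distance two in the intersection graph, but edges within distance two already number up to $\Delta(H)^2 = \Theta(\alpha^2 n^2 r^{2(n-1)})$, which makes $e\cdot\Pr[A_e]\cdot(D+1)\leq 1$ fail outright (the product scales like $r^{n-1}$). Worse, because cascades propagate, the event $A_e$ actually depends on arbitrarily distant edges, so no fixed-radius dependency bound suffices. The paper sidesteps this with a \emph{weighted} Local Lemma variant (Lemma~\ref{lm:local}, with local polynomials $w_X(z)$), applied not to the events ``$e$ is monochromatic at the end'' but to a carefully chosen family of combinatorially local events: degenerate-and-dangerous edges, disjoint complete h-trees, disjoint downward complete h-trees of size at least $\log n$, and \emph{bad cycles} of length less than $2\log n$. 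Simplicity is used exactly to control the h-tree and cycle counts (Propositions~\ref{prop:treesNb} and \ref{prop:cycleNb}) and to make the labelled edges of a disjoint h-tree pairwise almost-disjoint so that the probability factorizes (Proposition~\ref{prop:disjProb}).

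Third, the buffer width $p$ cannot be a constant. The paper sets $p=5\log(n)/n$, and this choice is what makes the h-tree probabilities in Proposition~\ref{prop:disjProb} (which carry a factor $(1-p)^{n/2}$ only at the \emph{root}) small enough to beat the $(4D)^N$ count of trees of size $N$, and simultaneously makes the degenerate-dangerous edge probability $\binom{n}{n/2}p^{n/2}(2/r)^n$ super-exponentially small. With a constant $p$, the degenerate edges (edges with at least $n/2$ ``flippable'' vertices, where you lose control of the recoloring) become non-negligible. So even setting aside the cascade issue, your parameter choice would need to be revisited.

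In short: the setup you describe is close in spirit, but the core of the proof --- the acyclic blame structure, its encoding into h-trees and cycles, the counting of these in a simple hypergraph, and the weighted LLL that ties these pieces together --- is exactly what is absent from your sketch, and is where essentially all the work lies.
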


Note that in comparison with  \eqref{bound:kk},  Theorem \ref{thm:simp} holds for any $r\geq 2$, not only for fixed values of $r$.
However, when $r$ becomes large, better bounds can be given.
In particular Frieze and Mubayi \cite{FrMub} showed that if $H$ is an $n$-uniform simple hypergraph with
\[
  \Delta(H)\leq c(n)r^{n-1}\ln r,
\]
where $c(n)>0$ is a function of $n$, then $H$ is $r$-colorable.
It follows form the proof in \cite{FrMub} that $c(n)=\Theta(n^{2-2n})$, so their result becomes nontrivial only for large values of $r$.

\bigskip
The restriction on the maximum edge degree in Theorem \ref{thm:simp} is not far  from the best possible.
In \cite{KR} Kostochka and R\"odl proved that, for any $n,r\geqslant 2$, there exists an $n$-uniform  simple hypergraph with
$$
  \Delta(H)\leqslant n^2r^{n-1}\ln r,
$$
which is not $r$-colorable.
Therefore the gap between two bounds is of the order $n\ln r$.

\bigskip
Methods used in the proof of Theorem \ref{thm:simp} can be used to address analogous problems in other classes of graphs.
We present such an extension concerning hypergraphs of arithmetic progressions over integers.
That allows us to derive a new lower bound for Van der Waerden numbers.
Van der Waerden number $W(n,r)$ is the minimum $N$ such that in any $r$-coloring of integers $\{1, \ldots, N\}$ there exists a monochromatic arithmetic progression of length $n$.
\begin{theorem}
\label{thm:VdW}
There exists positive $\beta$ such that for every $r\geq 2$ and $n\geq 3$, we have
\[
	W(n,r) \geq \beta r^{n-1}.
\]
\end{theorem}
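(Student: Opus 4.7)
The plan is to restate the lower bound on $W(n,r)$ as a hypergraph coloring problem. Let $H_{N,n}$ denote the $n$-uniform hypergraph with vertex set $[N]=\{1,\ldots,N\}$ whose edges are the arithmetic progressions of length $n$ in $[N]$. A proper $r$-coloring of $H_{N,n}$ is exactly an $r$-coloring of $[N]$ with no monochromatic $n$-term AP, so exhibiting such a coloring for $N=\lfloor \beta r^{n-1}\rfloor$, with $\beta>0$ suitably small, forces $W(n,r)>\beta r^{n-1}$.

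The next step is to control the maximum edge degree. Each vertex $v\in[N]$ lies in at most $nN/(n-1)$ arithmetic progressions of length $n$ (one chooses the position of $v$ in the progression and the common difference, which is restricted by $N$). Consequently every edge $e$ of $H_{N,n}$ meets at most $\Delta(H_{N,n})=O(nN)$ other edges. If we set $N=\lfloor\beta r^{n-1}\rfloor$, then $\Delta(H_{N,n})\leq C\beta\cdot nr^{n-1}$ for an absolute constant $C$, matching (up to the choice of $\beta$) the hypothesis of Theorem~\ref{thm:simp}.

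The genuine obstacle is that $H_{N,n}$ is \emph{not} simple: two distinct arithmetic progressions of length $n$ may share up to roughly $n/2$ points. However, $H_{N,n}$ is simple in a quantitative sense that is sufficient to adapt the proof. For any pair $\{x,y\}\subseteq e$ the number of APs of length $n$ through $\{x,y\}$ is at most $n\cdot\tau(y-x)$, since the common difference must divide $y-x$ and the starting position then has at most $n$ choices. Summing over the $\binom{n}{2}$ pairs inside a fixed edge shows that at most $O(n^3\log n)$ edges share at least two vertices with any given edge of $H_{N,n}$. This is vanishingly small compared with the total edge degree $\Theta(nr^{n-1})$, so the "non-simple" part of the neighborhood of each edge is negligible.

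The final step is to adapt the random recoloring argument underlying Theorem~\ref{thm:simp} to $H_{N,n}$. That proof uses the simplicity hypothesis only to estimate the probabilities of the bad events attached to individual edges in a Lovász Local Lemma style analysis; the key inputs are (i) a uniform upper bound on $\Delta(H)$ and (ii) the fact that pairs of edges interact through single vertices. Replacing (ii) with the quantitative bound above, the contribution of the $O(n^3\log n)$ edges that share two or more vertices with a given edge can be absorbed into the constants, leaving the estimates in essentially the form dictated by (i). The hard part will be verifying that every step of the recoloring analysis (in particular the estimates controlling the dependency neighborhoods in the Local Lemma application) is genuinely robust to this small perturbation, and then choosing $\beta$ small enough relative to the absolute constant produced by that analysis so that the bound $\Delta(H_{N,n})\leq C\beta\cdot nr^{n-1}$ falls inside the range where the adapted argument certifies $r$-colorability.
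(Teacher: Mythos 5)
Your reduction to coloring the hypergraph of arithmetic progressions and your degree bound $\Delta = O(nN)$ are exactly the paper's starting point, and you are right that $H_{N,n}$ fails to be simple only by a quantitatively small margin. However, your plan to ``absorb into constants'' the effect of edges that share two or more vertices misreads \emph{how} simplicity is used in the proof of Theorem~\ref{thm:simp}. Simplicity there is not a purely quantitative input that can be perturbed; it enters structurally in the reduction from non-disjoint complete h-trees to bad cycles. In a simple hypergraph a non-disjoint h-tree automatically yields a simple cycle of length at least $3$, because adjacent nodes already intersect in exactly one vertex. Once the hypergraph is non-simple you acquire a genuinely new case: two \emph{adjacent} nodes $x,y$ in the tree with $|e(x)\cap e(y)|\geq 2$, i.e.\ a ``cycle of length $2$.'' This is not covered by rescaling constants in the existing estimates; it is a new event type requiring its own probability bound (the paper's \emph{type $0$} cycles), and the bound uses the observation that if $|f_1\cap f_2|>n/2$ both edges are degenerate, which caps the intersection size.

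A second, subtler gap: even for cycles of length $\geq 3$, in a non-simple hypergraph the ``closing'' intersection $e(x_1)\cap e(x_k)$ can have up to $\sim n/2$ vertices, which destroys the independence used to bound the probability of a bad cycle in Proposition~\ref{prop:cycleNb}'s analogue. The paper handles this by splitting into cycles with $|e_1\cap e_k|\leq n/2$ (type~$1$, handled roughly as in the simple case) and $|e_1\cap e_k|>n/2$ (type~$2$). For type~$2$ one cannot ``absorb'' anything: one must instead exploit the special structure of arithmetic progressions, namely that two $n$-term APs sharing more than $n/2$ points have the same common difference and together span an AP of length at most $3n/2$ (Proposition~\ref{prop:arith_prog}(iii)), which drastically limits the number of such cycles. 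Your proposal contains no substitute for this step, and it is essential: without it the cycle count is not small enough. Finally, a minor numerical point: your divisor-counting bound $O(n^3\log n)$ on edges sharing $\geq 2$ vertices with a fixed edge does not hold uniformly (since $\tau(y-x)$ is not $O(\log)$ pointwise); the correct and cleaner bound, as in Proposition~\ref{prop:arith_prog}(ii), is $\binom{n}{2}n^2\sim n^4/2$, which is still negligible but should be used instead.
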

That improves over the bound of Szab\'{o} of the order $n^{o(1)}r^{n-1}$ and over recent bounds by Kozik \cite{Kozik} and by Kupavskii and Shabanov \cite{KupShab}, who proved respectively that
\[
	W(n,r) \geq \beta \frac{1}{\log n}r^{n-1} \quad \text{and} \quad 	W(n,r) \geq \beta \frac{\log\log n}{\log n} r^{n-1},
\]
for some constants $\beta$.
When $r=2$, better bounds for some $n$ can be derived from the bound of Berlekamp \cite{Berlekamp}, which states that $W(p+1,2) \geq p2^p$, for prime $p$.

\bigskip
The rest of the paper is organized as follows.
In the next section we introduce notation and discuss a special variant of Local Lemma.
In Section \ref{sec:MGC} we describe and analyze the coloring algorithm.
Theorem \ref{thm:simp} is proved in Section \ref{sec:simp}.
In Section \ref{sec:VdW} we deduce a lower bound for Van der Waerden numbers.
Finally Section \ref{sec:corollaries} is devoted to various corollaries.

\section{Notations and tools}


\subsection{Trees}
Trees considered in this paper are always rooted.
Vertices of trees are called \emph{nodes}.
For every two adjacent nodes, the node that is farther from the root of the tree is a \emph{child} of the other node.
It is convenient to use the following definition of subtree.
For a tree $T$, \emph{a subtree} is a connected subgraph $T'$ such that $T$ after removing nodes of $T'$ remains connected and there exists $x$ in $T$ such that any path from any node of $T'$ to the root of $T$ contains $x$.
Such subtree is always rooted at the node that is of the closest distance to the root of the original tree.
Direct subtree of $T$ is a subtree rooted at a child of the root of $T$. 
For a fixed tree, by $d(x)$ we denote the number of children of the node $x$ in the tree.

\subsection{Local Lemma}
In \cite{Szabo} Szab\'{o} used a specific variant of Local Lemma derived from the general version by Beck in \cite{Beck}.
We use the following generalization from \cite{Kozik} of the  Beck's variant.

\begin{lemma}
\label{lm:local}
	Let $\vS=\{X_1, \ldots, X_m\}$ be independent random variables (or vectors) in arbitrary probability space and let $\evS$ be a finite set of events determined by these variables.
For $A\in \evS$, let $\vars (A)$  be the minimum set of variables that determines $A$.
For $X\in \vS$, we define formal polynomial $w_X(z)$ in the following way:
\[
	w_X(z)= \sum_{A\in \evS: X\in \vars(A)} \prob(A) \; z^{|\vars(A)|}.
\]
Suppose that a polynomial $w(z)$ \emph{dominates} all polynomials $w_X(z)$ for $X\in \vS$ i.e. for every  real $z_0 \geq 1$ we have $w(z_0) \geq w_x(z_0)$.
If there exists $\tau_0\in (0,1)$ such that
\[
	w\left(\frac{1}{1-\tau_0}\right) \leq \tau_0,
\]
then all events from $\evS$ can be simultaneously avoided with positive probability, i.e. $\prob\left(\bigcap_{A\in\evS}\overline{A}\right)>0$.
\end{lemma}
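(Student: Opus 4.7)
The plan is to reduce Lemma~\ref{lm:local} to the classical asymmetric variable Lovász Local Lemma. Since the events in $\evS$ are determined by independent variables, any $A\in\evS$ is mutually independent of every family of events whose combined variable set is disjoint from $\vars(A)$; this justifies using ``sharing a variable with $A$'' as the dependency relation. I will apply the classical LLL in its familiar form: if there exist weights $x(A)\in[0,1)$ with
\[
  \prob(A)\;\leq\;x(A)\prod_{\substack{B\neq A\\ \vars(B)\cap\vars(A)\neq\emptyset}}\bigl(1-x(B)\bigr)
\]
for every $A\in\evS$, then $\prob\bigl(\bigcap_{A\in\evS}\overline{A}\bigr)\geq\prod_{A}(1-x(A))>0$.

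The key is to pick weights that talk to the polynomial hypothesis at the point $z=1/(1-\tau_0)$. Set $\mu(A):=\prob(A)(1-\tau_0)^{-|\vars(A)|}$ and $x(A):=\mu(A)/(1+\mu(A))\in[0,1)$. Substituting $1-x(B)=(1+\mu(B))^{-1}$ and moving the factor $1+\mu(A)$ into the product (absorbing the index $B=A$), the LLL condition is equivalent to
\[
  \prob(A)\prod_{B:\,\vars(B)\cap\vars(A)\neq\emptyset}\bigl(1+\mu(B)\bigr)\;\leq\;\mu(A).
\]
Taking logarithms and using $\ln(1+t)\leq t$, it suffices to check
\[
  \sum_{B:\,\vars(B)\cap\vars(A)\neq\emptyset}\mu(B)\;\leq\;\ln\mu(A)-\ln\prob(A)\;=\;-|\vars(A)|\ln(1-\tau_0).
\]

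At this point the polynomial dominance does exactly what I want. Summing first over the variables shared with $A$ and invoking the definition of $w_X$ at $z_0=1/(1-\tau_0)>1$,
\[
  \sum_{B:\,\vars(B)\cap\vars(A)\neq\emptyset}\mu(B)\;\leq\;\sum_{X\in\vars(A)}\sum_{B:\,X\in\vars(B)}\mu(B)\;=\;\sum_{X\in\vars(A)}w_X\!\bigl(\tfrac{1}{1-\tau_0}\bigr)\;\leq\;|\vars(A)|\cdot w\!\bigl(\tfrac{1}{1-\tau_0}\bigr)\;\leq\;|\vars(A)|\,\tau_0,
\]
by dominance of $w$ and the hypothesis $w(1/(1-\tau_0))\leq\tau_0$. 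Since the elementary inequality $\tau_0\leq-\ln(1-\tau_0)$ holds for every $\tau_0\in(0,1)$ (Taylor expansion $-\ln(1-t)=t+t^2/2+\cdots$), the required estimate follows and the classical LLL concludes.

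The main conceptual step is really just aligning the weights with the evaluation point of $w$: the exponent $(1-\tau_0)^{-|\vars(A)|}$ in $\mu(A)$ matches the monomial $z^{|\vars(A)|}$ in the definition of $w_X$, so that the polynomial bound converts directly into a bound on $\sum\mu(B)$. Everything else is routine—the double-counting interchange of summation, the two standard inequalities $\ln(1+t)\leq t$ and $-\ln(1-t)\geq t$, and a quotation of the variable-based Erdős–Spencer LLL.
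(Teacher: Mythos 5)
Your proof is correct. Let me verify the chain of reasoning. Since the events are determined by the independent variables $X_1,\dots,X_m$, the variable-disjointness graph (edge between $A$ and $B$ iff $\vars(A)\cap\vars(B)\neq\emptyset$) is a valid dependency graph, so the classical asymmetric Lov\'asz Local Lemma applies. With $z_0=1/(1-\tau_0)>1$, your weights $\mu(A)=\prob(A)z_0^{|\vars(A)|}$ and $x(A)=\mu(A)/(1+\mu(A))\in[0,1)$ transform the LLL condition into $\prob(A)\prod_{B:\vars(B)\cap\vars(A)\neq\emptyset}(1+\mu(B))\leq\mu(A)$ after absorbing the $B=A$ factor; taking logarithms and using $\ln(1+t)\leq t$ reduces this to $\sum_{B:\vars(B)\cap\vars(A)\neq\emptyset}\mu(B)\leq |\vars(A)|\ln z_0$. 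The union bound over $X\in\vars(A)$ identifies $\sum_{B:X\in\vars(B)}\mu(B)=w_X(z_0)$ exactly, so dominance gives the bound $|\vars(A)|\,w(z_0)\leq|\vars(A)|\tau_0$, and $\tau_0\leq -\ln(1-\tau_0)$ closes the gap. Every step checks out, including the edge cases ($\prob(A)=0$ gives $\mu(A)=0$ on both sides; $\prob(A)$ large forces $w(z_0)>\tau_0$, falsifying the hypothesis).

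For comparison: the paper itself does not prove Lemma~\ref{lm:local}; it is quoted as a generalization of Beck's variant of the Local Lemma and attributed to \cite{Kozik}. Your derivation --- choosing weights of the form $\mu(A)/(1+\mu(A))$ with $\mu(A)=\prob(A)z_0^{|\vars(A)|}$ so that the monomial exponent in $w_X$ lines up with the geometric factor in the LLL product --- is the standard way such polynomial-formulated local lemmas are reduced to the asymmetric version, and is almost certainly the route taken in the cited source. So you have supplied a proof the paper omits, and done so correctly; the only cosmetic issue is that the equality $\ln\mu(A)-\ln\prob(A)=|\vars(A)|\ln z_0$ should really be written directly (without passing through $\ln\prob(A)$, which may be $-\infty$), but the meaning is unambiguous.
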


We say that $w_X(z)$ is a \emph{local polynomial for random variable $X$} and $w(z)$ is a \emph{local polynomial}.

All applications of the above lemma within the current paper concern $n$-uniform hypergraphs.
For such hypergraph, we always choose $\tau_0=\tau_0(n)=1/n$.
For convenience we put $z_0=z_0(n)= \frac{1}{1-\tau_0(n)}$.


\newcommand{\ic}{c^0}
\section{Coloring algorithm}
\label{sec:MGC}
Let $H=(V, E)$ be an $n$-uniform  hypergraph and $r$ be a number of colors.
We present and analyse an algorithm that tries to improve coloring given on the input.
The algorithm is parameterized by $p \in (0, 1/2)$ and gets two inputs: first is an \emph{initial coloring} $\ic$ of the hypergraph, second is an injective  function $\sigma:V \to [0,1]$, called \emph{weight} assignment.
For every vertex $v$, color $\ic(v)$ assigned to it by the initial coloring is called \emph{the initial color} of $v$.
We say that vertex $v$ is a $j$-vertex if it is initially colored with $j$.
The value $\sigma(v)$ is called the \emph{weight} of $v$.
Vertex $v$ is called \emph{free} if $\sigma(v) \leq p$.
Recall that an edge is monochromatic w.r.t. some coloring if all its vertices gets the same color.
In any set of vertices the \emph{first vertex} is the vertex $v$ with minimum weight, i.e. minimum value of $\sigma(v)$.
We use a succinct notation $(n)_r$ to denote the value of $n \pmod r$.
\\
\begin{algorithm-hbox}[H]
\caption{$r$--coloring of hypergraph $(V,E)$}\label{alg:MGC}
	\textbf{Input:} $c:V \to \{0, \ldots, r-1 \},\; \sigma:V \to (0,1]$ injective\\
 	\While{there exists a monochromatic edge whose first non--recolored vertex $v$ is free}{
 		$c(v) \gets (c(v)+1)_r$\hspace{1cm} (i.e. $v$ is recolored with $(c(v)+1)_r$)
 	} 	
\Return $c$
\end{algorithm-hbox}	
Note that during the evaluation of the algorithm every vertex changes its color at most once, therefore the procedure always stops.

An edge $f \in E$ is called \emph{degenerate} if it contains at least $n/2$ free vertices.
It is said to be \emph{dangerous} if there exists $i\in \{0,\ldots,r-1\}$, called \emph{dominating color} of $f$, such that all non--free vertices of $f$ are initially colored with $i$ and every free vertex of $v$ is initially colored with $i$ or $(i-1)_r$.
The remaining edges are called \emph{safe}.
It is straightforward to check that safe edges are never monochromatic in the coloring returned by the algorithm.

\bigskip
In order to analyse the situations in which the coloring returned by the algorithm is not proper we introduce the following definitions.
An \emph{h-tree} is a rooted tree labelled according to the following rules:
\begin{enumerate}
	\item each node $x$ is labelled by an edge $e(x)$ of the hypergraph $H$,
	\item each edge $f$ is labelled by a vertex $v(f)$ of the hypergraph $H$,
	\item for an edge $f=(x_1, x_2)$ we have $e(x_1) \cap e(x_2) \ni v(f)$.
\end{enumerate}
An h-tree is called \emph{disjoint} if for every two distinct nodes $x,y$, edge $e(x)$ intersects $e(y)$ in at most one vertex and the intersection is not empty only when $x,y$ are adjacent in the tree (in particular the set of labels of the vertices forms a hypertree).
A node $x$ \emph{contains} vertex $v$ if $v\in e(x)$.
Similarly, we say that an h-tree \emph{contains} edge $f$ if some node of the tree is labelled with $f$.
To make a clear distinction, rooted trees without any labellings are called \emph{structures}.

\bigskip
An h-tree of only one node $x$ is called \emph{alternating} (w.r.t. some weight assignment $\sigma$ and initial coloring $\ic$),  if $e(x)$ is neither degenerate nor safe.
A tree with root $x$ and direct subtrees $t_0, \ldots, t_{k-1}$ is \emph{alternating} if
\begin{enumerate}
	\item $e(x)$ is neither degenerate nor safe, let $i$ be the dominating color of $e(x)$,
	\item all subtrees  $t_0, \ldots, t_{k-1}$ are alternating, let $y_0, \ldots, y_{k-1}$ be their roots,
	\item for every $j\in \{0, \ldots, k-1\}$, the dominating color of $e(y_j)$ is $(i-1)_r$,
	\item for every $j\in \{0, \ldots, k-1\}$, the first $(i-1)_r$-vertex of $e(y_j)$ belongs to $e(x)$,
	\item every free $(i-1)_r$-vertex of $e(x)$ is the first  $(i-1)_r$-vertex of some $e(y_j)$.
\end{enumerate}

An alternating h-tree is \emph{downward complete} if all the edges labelling its leaves are monochromatic in the initial coloring, i.e. every edge, that labels a leaf, with some dominating color $i$,  does not contain a free $(i-1)_r$-vertex.
It is \emph{complete} if additionally the root does not contain any free vertex initially colored with its dominating color.

\begin{prop}
\label{prop:cc2}
If for initial coloring $\ic:V\to \{0,\ldots,r-1\}$ and injective $\sigma : V \to [0, 1]$, there are no degenerate dangerous edges in the hypergraph and the algorithm produces a coloring which is not proper, then there exists a complete h-tree (w.r.t. $\sigma$ and $\ic$).
\end{prop}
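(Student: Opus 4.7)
The plan is to take any monochromatic edge $f$ of the output coloring (one must exist by assumption) as the root of the desired h-tree and to build it recursively: for each free vertex whose recoloring made $f$ monochromatic, we follow the edge that triggered that recoloring and attach it as a child, and we iterate.

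First I would analyze $f$. Let $i$ be its output color. Since every vertex is recolored at most once, and only from some $a$ to $(a+1)_r$, every vertex of $f$ has initial color either $i$ (if never recolored) or $(i-1)_r$ (if recolored, in which case it is free). In particular all non-free vertices of $f$ are initial $i$-vertices, so $f$ is dangerous with dominating color $i$; by hypothesis it is also non-degenerate, hence neither degenerate nor safe, and qualifies as the root of an alternating h-tree. The ``root'' part of the completeness condition --- no free initial-$i$-vertex in $f$ --- will follow from the termination rule: if all vertices of $f$ had been recolored, $f$ would consist of $n$ free vertices and be degenerate dangerous, contradicting the hypothesis; thus $f$ has a non-recolored vertex, and the first such must be non-free (else the main loop would not have stopped), hence of weight $>p$. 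Any free $i$-vertex would be non-recolored (a recoloring would push it to $(i+1)_r$) of weight $\leq p$, contradicting the minimality of the first non-recolored vertex.

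Next I would build the children. For each free $(i-1)_r$-vertex $u$ of $f$, the algorithm recolored $u$ at some step triggered by a monochromatic edge $f'$ whose first non-recolored vertex at that instant was $u$. I would attach a child $y$ with $e(y)=f'$ and label the connecting tree edge by $u$; clearly $u\in f\cap f'$. At that instant $f'$ is monochromatic of color $(i-1)_r$, and running the same dichotomy on $f'$ shows that its non-recolored vertices are exactly its initial-$(i-1)_r$-vertices (any initial-$(i-1)_r$-vertex recolored earlier would now have color $i$, spoiling monochromaticity) while the already-recolored ones are free and initially $(i-2)_r$. Hence $f'$ is dangerous with dominating color $(i-1)_r$, is non-degenerate by hypothesis, and $u$ --- having minimum weight among the non-recolored vertices of $f'$ --- is the first initial-$(i-1)_r$-vertex of $f'$. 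This yields alternating conditions~(3) and~(4); producing one such child per free $(i-1)_r$-vertex of $f$ yields~(5). The children are automatically distinct, because once a vertex is recolored (from color $a$ to $(a+1)_r$) no edge containing it can later be monochromatic in color $a$.

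Iterating the construction on each $f'$ relative to its own free $(i-2)_r$-vertices, and so on, produces the full tree. Because the triggering time strictly decreases along every root-to-leaf path, the recursion terminates. A branch ends at an edge $f^{(k)}$ with dominating color $i^{(k)}$ and no free $(i^{(k)}-1)_r$-vertex; being dangerous, all of $f^{(k)}$'s $(i^{(k)}-1)_r$-vertices would have to be free, so in fact there are none and $f^{(k)}$ is initially monochromatic in color $i^{(k)}$. These leaves deliver downward completeness, and together with the root condition established above yield a complete h-tree. The main technical point to watch is that the dominating color really shifts by $-1$ at each descent and that the ``first'' vertex of a child always lies in the parent edge; both rest on the single observation that a vertex not yet recolored has current color equal to its initial color, so a monochromatic-at-time-$t$ edge fully determines the initial colors of its currently non-recolored vertices, and the recursive step essentially reuses the argument made at the root.
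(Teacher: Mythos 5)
Your proposal is correct and takes essentially the same approach as the paper: follow, from a monochromatic output edge, the edges that were responsible for each free vertex's recoloring. The paper packages this as a directed ``blame'' graph on edges, proves it acyclic, and then takes the tree of all directed paths starting from the offending edge; you instead build the h-tree by direct recursion, and replace the acyclicity lemma with the observation that the triggering (recoloring) times strictly decrease along any root-to-leaf path, which guarantees termination. These are two phrasings of the same argument, and all the verification steps you carry out (the dichotomy ``non-recolored $\Rightarrow$ initial color equals current color'', the identification of the dominating colors, the fact that the blaming vertex is the first $(i-1)_r$-vertex of the child, the leaf and root conditions) match the content of the paper's proof.
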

\begin{proof}
Whenever during the evaluation of the algorithm some vertex $v$ is recolored, it is the first non-recolored free vertex of some edge $f$ that at that moment was monochromatic.
In this case vertex $v$ is said to \emph{blame} edge $f$ and for every free vertex, which has been recolored during the evaluation of the algorithm, we choose one edge to be blamed.
We say that edge $f_1$  blames edge $f_2$ if $f_1$ contains a vertex that blames $f_2$.
Note that only dangerous edges can be blamed.

Relation of blaming defines a directed graph $B$ on the edges of the hypergraph.
This graph turns out to be acyclic. Indeed let $f_1, \ldots, f_k$ be a directed cycle in this graph (i.e. $f_j$ blames $f_{j+1}$ and $f_k$ blames $f_1$) and suppose that $f_1$ is the edge of the cycle that became blamed last during the evaluation of the algorithm.
Let $i_1, \ldots, i_k$ denote the dominating colors of $f_1, \ldots, f_k$.
Clearly we have $i_j=(i_{j+1}+1)_r$ and $i_k=(i_{1}+1)_r$.
Edge $f_1$ became blamed last and until that time the first vertex of $f_1$ has not been recolored.
That vertex must belong also to $f_k$ which shows that until that time $f_k$ contained a vertex of color $(i_k-1)_r$.
Therefore this edge could not have been monochromatic.
That implies that it cannot be blamed.
This contradicts the assumption that $f_1$ has been blamed last.

Suppose that the algorithm constructed a coloring which is not proper.
Therefore there exists a monochromatic edge $f$.
Let $T$ be the set of directed paths in $B$ starting from $f$.
We say that two paths in $T$ are adjacent if one path is a prefix of the other and is shorter by exactly one.
Clearly $T$ with such defined adjacency is a tree, from now the elements of $T$ are called nodes.

Every node $x$ of $T$ is a path of edges of the hypergraph, we choose a label $e(x)$ as the last edge of the path.
Then for every adjacent nodes $x_1, x_2$ we know that $e(x_1)$ blames $e(x_2)$ (or the other way around.)
In particular their intersection is nonempty.
Moreover, the first vertex $w$ of $e(x_2)$ belongs to $e(x_1)$.
We set the label of tree edge $(e(x_1), e(x_2))$ to $w$.
Such labelled $T$ is a complete h-tree.
\end{proof}

\begin{corollary}
\label{cor:condT}
If there are no degenerate dangerous edges and no complete h-trees w.r.t. some weight assignment  and initial coloring, then the algorithm produces a proper coloring given that assignment and coloring on the input.
\end{corollary}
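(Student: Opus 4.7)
The statement is essentially the contrapositive of Proposition \ref{prop:cc2}, so my plan is to derive it directly from that proposition rather than re-enter the combinatorial argument about the blaming graph.

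First I would fix an initial coloring $\ic$ and an injective weight assignment $\sigma$ satisfying the two hypotheses: no degenerate dangerous edge exists, and no complete h-tree (w.r.t.\ $\sigma$ and $\ic$) exists. Since the algorithm always terminates (every vertex is recolored at most once, as noted right after the pseudocode), it outputs some coloring $c$. I want to show $c$ is proper.

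Next I would argue by contradiction: suppose $c$ is not proper, i.e.\ some edge is monochromatic in $c$. Then the hypotheses of Proposition \ref{prop:cc2} are met (no degenerate dangerous edges, and the algorithm's output is not proper), so the proposition yields a complete h-tree with respect to $\sigma$ and $\ic$. This directly contradicts the second standing assumption, completing the proof.

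The only subtlety worth flagging is to make sure the hypotheses of the corollary match those of Proposition \ref{prop:cc2} verbatim: the proposition assumes ``no degenerate dangerous edges'' (not merely ``no dangerous edges'' or ``no degenerate edges''), and concludes the existence of a \emph{complete} h-tree, which is exactly the object we are assumed to rule out. Since the match is exact, no additional argument is needed beyond invoking the proposition contrapositively. There is no real obstacle here; the corollary is just the statement of Proposition \ref{prop:cc2} repackaged for later use in Sections \ref{sec:simp} and \ref{sec:VdW}, where the task will be to verify, with positive probability over $\sigma$, that neither of the two forbidden configurations arises.
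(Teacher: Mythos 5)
Your proposal is correct and coincides with the paper's (implicit) approach: the corollary is stated without proof precisely because it is the immediate contrapositive of Proposition~\ref{prop:cc2}, together with the observation that the algorithm always terminates. You correctly check that the hypotheses match verbatim, so nothing further is needed.
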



\newcommand{\cd}{C}

\section{Simple hypergraphs}
\label{sec:simp}

\subsection{Some auxiliary claims.}

We start with estimating the number of h-trees in a hypergraph with bounded edge degrees.
\begin{prop}
\label{prop:treesNb}
Let $H=(V,E)$ be a hypergraph with maximum edge degree $D$ and let $v\in V$ be its arbitrary vertex. Then the number of disjoint h-trees of size $N$ containing $v$ is at most $(4D)^N$.
\end{prop}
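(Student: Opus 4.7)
The plan is to decouple the shape of the underlying rooted tree from its labeling by edges of $H$. A key preliminary observation is that for a disjoint h-tree, any two adjacent node-labels $e(x_1), e(x_2)$ intersect in at most one vertex, and since they are required to intersect at all (by rule (3) of the h-tree definition together with the tree-edge label lying in their intersection), they intersect in exactly one vertex. Hence the vertex label $v(f)$ on each tree-edge is forced by the node labels, and it suffices to count the ways to assign edges of $H$ to the nodes of a rooted tree so that adjacent node-edges share a vertex and the root's edge contains $v$.

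First, I would bound the number of possible rooted tree shapes on $N$ nodes. Every unordered rooted tree is realized by some rooted ordered (plane) tree, so using plane trees yields a valid upper bound: the number of rooted plane trees on $N$ nodes is the Catalan number $C_{N-1}\le 4^{N-1}$.

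Second, for a fixed plane shape I would count labelings from the root downward. The root's edge must contain $v$; the number of edges of $H$ through $v$ is at most $D+1$, because if $k$ distinct edges share the vertex $v$ then each of them intersects the other $k-1$, so each has edge degree at least $k-1$, forcing $k\le D+1$. For any non-root node, its parent's edge is already chosen, and its own edge must share a vertex with the parent's edge; the number of such candidates is at most $D$ by the very definition of edge degree. Multiplying, the total number of disjoint h-trees of size $N$ containing $v$ is at most
\[
    4^{N-1}\cdot(D+1)\cdot D^{N-1}\le \tfrac12\,(4D)^{N}\le(4D)^N,
\]
valid whenever $D\ge 1$; the degenerate case $D=0$ is trivial since then at most one edge contains $v$ and the only h-tree of positive size containing $v$ is the singleton.

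I expect the main step to be conceptual rather than computational: recognizing that disjointness pins down the tree-edge labels and reduces the problem to a pure \emph{Catalan times per-node branching} estimate. Once this is in place, the only non-cosmetic calculation is the observation that the maximum edge degree $D$ controls both the branching at non-root nodes and, with an extra $+1$, the number of edges through the distinguished vertex $v$.
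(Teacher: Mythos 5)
Your approach matches the paper's: decouple the rooted-tree shape from the edge labeling, bound the number of shapes by a Catalan-type estimate, and propagate labels node by node with at most $D$ choices per step using the max edge degree. Your observation that disjointness forces every tree-edge label (so node labels determine the whole h-tree) is also exactly the paper's, and your sharper $D+1$ bound on the vertex degree of $v$ is a correct refinement of a small imprecision in the paper.

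However, there is a genuine gap in your counting: you assume that ``the root's edge must contain $v$,'' but that is not what the statement says. An h-tree \emph{contains} the vertex $v$ as soon as $v\in e(x)$ for \emph{some} node $x$ of the tree; since h-trees are rooted objects, $v$ may well appear only in a deep non-root node. Your count therefore only enumerates the h-trees in which $v$ happens to sit in the root's label, and omits all the others. The fix is to multiply by a factor $N$ for the choice of the node $x$ that hosts $v$, and then to compensate by replacing your loose estimate $C_{N-1}\le 4^{N-1}$ with the standard sharper bound $C_{N-1}=\tfrac1N\binom{2N-2}{N-1}\le 4^{N-1}/N$. After both changes the count becomes
\[
  \frac{4^{N-1}}{N}\cdot N\cdot(D+1)\cdot D^{N-1}
  = 4^{N-1}(D+1)D^{N-1}\le (4D)^{N}\quad(D\ge1),
\]
which is precisely how the paper obtains $(4D)^N$. (As an aside, your remark about $D=0$ does not actually save the statement there either, since a singleton h-tree may exist while $(4\cdot0)^N=0$; but this degenerate case is irrelevant to the application.)
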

\begin{proof}
Let us fix some specific tree structure $s$ of size $N$.
We have $N$ possible choices for the node $x$ containing $v$, and at most $D$ possible labels for that node.

Starting from this we extend labelling according to the following rule: for every unlabelled node $y$ which is adjacent to a labelled node $x$ pick any edge that intersects $e(x)$ as a label.
Each time we have at most $D$ choices for the next label, hence the total number of node labellings constructed in such a way is at most $D^N$.

When all the nodes are labelled properly (i.e. no two distinct nodes $x,y$ have the size of intersection $e(x)\cap e(y)$ larger than 1), we can uniquely extend the labelling to edges obtaining an h-tree with structure $s$.

Clearly every such disjoint h-tree  with structure $s$ containing $v$ in $x$ can be constructed in this way.
Therefore $v$ belongs to at most $N D^N$ h-trees with structure $s$. 	
The number of possible structures of size $N$ does not exceed  $4^N/N$.
Hence the total number of disjoint h-trees containing $v$ of size $N$ is smaller than $(4D)^N$.
\end{proof}

The second type of structures that play important role in our proof are cycles.
A sequence of distinct edges $(f_0, \ldots, f_{k-1})$ of a hypergraph forms a \emph{simple cycle} if for every $i\in \{0, \ldots, k-1\}$ edge $f_i$ intersects only edges $f_{(i-1)_k}$ and $f_{(i+1)_k}$.
The next proposition estimates the number of simple cycles in simple hypergraphs.

\begin{prop}
	\label{prop:cycleNb}
Let $H=(V,E)$	be a simple $n$-uniform hypergraph with maximum edge degree $D$ and let $v\in V$ be its arbitrary vertex.
Then the number of cycles of length $N\geq 2$ containing $v$ is at most $N D^{N-1} n^2$.
\end{prop}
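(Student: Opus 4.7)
The plan is to enumerate the simple cycles $(f_0,\ldots,f_{N-1})$ of length $N$ containing $v$ by exploiting simplicity at two levels. In a simple hypergraph, consecutive edges $f_j,f_{j+1}$ of the cycle share a unique vertex; call it $v_j := f_j\cap f_{j+1}$. Each edge $f_i$ thus has exactly two ``junction'' vertices, $v_{i-1}$ and $v_i$, both lying in $f_i$.

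I would count the cycles containing $v$ by specifying a triple: (a) the index $i\in\{0,\ldots,N-1\}$ of an edge of the cycle that contains $v$, giving at most $N$ choices; after rotating, WLOG $v\in f_0$; (b) the ordered pair of junction vertices $(v_{N-1},v_0)$ of $f_0$, viewed as an ordered pair of vertices lying in $f_0$ (which has $n$ vertices), giving at most $n^2$ choices; the point of using simplicity here is that the pair $(v_{N-1},v_0)$ also determines $f_0$, since in a simple hypergraph any two vertices lie in at most one common edge; (c) the remaining edges $f_1,f_2,\ldots,f_{N-1}$ obtained by walking, each $f_j$ intersecting $f_{j-1}$, which by the edge-degree bound gives at most $D$ choices per step and $D^{N-1}$ in total.

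Multiplying the three factors yields at most $N\cdot n^2\cdot D^{N-1}$ parametrizations, and every cycle of length $N$ containing $v$ appears at least once (in fact at least twice, one for each direction of traversal). This yields the claimed bound.

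The main obstacle is step (b): the naive counting would pick $f_0$ separately among the (at most $D+1$) edges through $v$ and then pick the two junctions inside $f_0$ (at most $n^2$ choices), producing an undesired extra factor of $D+1$. The crux of the proof is to eliminate this factor by using simplicity to absorb the choice of $f_0$ into the choice of its junction pair, as outlined above. Beyond that, the argument parallels the proof of Proposition~\ref{prop:treesNb}, where vertex choices along a walk are replaced by at most~$D$ edge choices via the edge-degree bound.
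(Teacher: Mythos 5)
There is a genuine gap at step (b). You claim the ordered pair of junction vertices $(v_{N-1},v_0)$ of $f_0$ ranges over at most $n^2$ values because ``both lie in $f_0$, which has $n$ vertices.'' But that bound presupposes $f_0$ is already fixed, which is exactly what you are trying to determine \emph{from} the pair. Before $f_0$ is chosen, the candidates for $(v_{N-1},v_0)$ are all ordered pairs of vertices lying in \emph{some} edge through $v$; if $v$ has vertex degree $d$ (which in a simple hypergraph can be as large as $D+1$), simplicity implies these pair-sets are essentially disjoint across the $d$ edges, so the count is about $d\cdot n(n-1)$, not $n^2$. The factor you wanted to eliminate thus reappears, and your parametrization only gives roughly $N\cdot (D+1) n^2\cdot D^{N-1}$, which is a factor of order $D$ worse than the claimed bound.

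The paper's argument puts the ``two vertices determine the edge'' observation in a place where it actually bites. It pays for $f_0$ up front ($\le D$ choices), then walks forward only $N-2$ steps to choose $f_1,\ldots,f_{N-2}$ ($D^{N-2}$ choices), and uses simplicity on the \emph{last} edge $f_{N-1}$: that edge must intersect both $f_0$ and $f_{N-2}$, which are already fixed, so one picks the intersection vertex with $f_0$ (at most $n$ choices, as it lies in $f_0$) and the intersection vertex with $f_{N-2}$ (at most $n$ choices), and simplicity then leaves at most one candidate edge, for $\le n^2$ choices. The point is that for the last edge, both of its junction vertices live inside already-determined $n$-element sets, which is what legitimizes the $n^2$; applied to $f_0$, whose neighbors are not yet determined, the same observation does not control the range of the junction pair.
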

\begin{proof}
We have $N$ possible choices for the index $j$ of the edge that contains $v$ in the cycle (which is formally a sequence).
Then there are at most $D$ possible choices for edge $f_j$.
Suppose that edges $f_j, \ldots, f_{(j+s)_N}$ are already chosen.
If $s<N-2$ then we have at most $D$ choices for the edge  $f_{(j+s+1)_N}$(it has to intersect $f_{(j+s)_N}$).
When choosing the last edge we must ensure that it intersects both $f_j$ and $f_{(j-2)_N}$.
The number of such edges is at most $n^2$  (we have $n$ choices for each vertex from the intersections and once they are fixed there exists at most one edge containing them both since the hypergraph is simple).
Altogether it gives at most $N D^{N-1} n^2$ different cycles.
\end{proof}

Now we are going to estimate the probabilities that given h-tree in a simple $n$-uniform hypergraph becomes complete under the random input.
Note that, in simple hypergraphs, any correct labelling of the nodes of an h-tree uniquely determines the labels of the edges.

\begin{prop}
\label{prop:disjProb}
Let $H$ be an $n$-uniform hypergraph and let weight assignment function $\sigma$ and initial $r$-coloring $\ic$ of $H$ be chosen uniformly at random.
Then the probability that a fixed disjoint h-tree $T$ of size $N$ is downward complete is smaller than
\[
  r^{-(n-1)N} \left( \frac{2}{n} \right)^{N-1}
\]
and the probability that it is complete is smaller than
\[
  (1-p)^{n/2} r^{-(n-1)N} \left( \frac{2}{n} \right)^{N-1}.
\]
\end{prop}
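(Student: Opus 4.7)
The plan is to bound the probability by a bottom-up induction along the tree, integrating out the weights of the shared vertices one at a time; this converts the first-vertex minimality condition (condition (4) of the alternation definition) into a factor of roughly $1/n$ per non-root node, which combined with a factor of $r^{-(n-1)}$ per node coming from the coloring constraints produces the claimed bound.

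I would first fix the dominating color $i_0$ of the root (summing over the $r$ choices contributes a multiplicative factor $r$ at the end); in an alternating tree this determines the dominating color $i_x$ of every node $x$. Combined with dangerousness and with condition (5), this forces a rigid per-vertex structure: every unshared vertex of any node $x$ must be initially $i_x$-colored; the shared vertex with the parent (if any) must be free and $i_x$-colored; and each shared vertex with a child must be free and $(i_x-1)_r$-colored. Leaves are entirely $i_x$-colored, and in the complete case the unshared vertices of the root are additionally required to be non-free.

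Next, for each non-root node $y$ with parent-shared vertex $v$, I introduce
\[
  F_y(s) := \prob\bigl[\text{subtree at } y \text{ satisfies its conditions} \,\bigm|\, \sigma(v)=s,\ \ic(v)=i_y\bigr],\qquad s\in[0,p].
\]
Condition (4) forces every other $i_y$-colored vertex of $e(y)$ to have weight strictly greater than $s$; consequently for a leaf $F_y(s)=((1-s)/r)^{n-1}$, and for an internal non-root node the $n-1-d(y)$ unshared vertices each contribute $(1-s)/r$ (they must be $i_y$-colored with weight $>s$) while each child-shared vertex contributes $\tfrac{1}{r}\int_0^p F_{y_j}(s_j)\,ds_j$, yielding
\[
  F_y(s) \leq \Bigl(\tfrac{1-s}{r}\Bigr)^{n-1-d(y)} \prod_{j=1}^{d(y)} \frac{1}{r}\int_0^p F_{y_j}(s_j)\,ds_j.
\]
Setting $G_y := \int_0^p F_y(s)\,ds$ and using $\int_0^p (1-s)^{n-1-d(y)}\,ds \leq 1/(n-d(y))$, a straightforward induction on subtree size then gives
\[
  G_y \leq r^{-(n-1)|T_y|} \prod_{x\in T_y} \frac{1}{n-d(x)}.
\]

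To finish, I would combine the children's contributions at the root: the $n-d(r_0)$ unshared vertices of $e(r_0)$ contribute $(1/r)^{n-d(r_0)}$ in the downward-complete case and $((1-p)/r)^{n-d(r_0)}$ in the complete case, while each child contributes $G_{y_j}/r$. After summing over the $r$ choices of $i_0$, the exponents collapse to give $r^{-(n-1)N}\prod_{x\neq r_0}1/(n-d(x))$, with an additional $(1-p)^{n-d(r_0)}$ factor in the complete case. The non-degeneracy assumption closes the proof: for any non-root node $x$, the parent-shared vertex together with the $d(x)$ child-shared vertices are all free, so $d(x)+1 \leq k_x < n/2$ and $1/(n-d(x)) < 2/n$, giving $\prod_{x\neq r_0}1/(n-d(x)) \leq (2/n)^{N-1}$; the analogous argument at the root (where in the complete case the free vertices of $e(r_0)$ are exactly the $d(r_0)$ child-shared vertices) yields $n-d(r_0) > n/2$, converting $(1-p)^{n-d(r_0)}$ into $(1-p)^{n/2}$. (If some $d(x)\geq n/2$, the event is already impossible and the bound is trivial.) The main technical point is the careful threading of condition (4) through the recursion: it couples the weight of each parent-shared vertex to the other $i_y$-colored vertices of the edge immediately below, and handling this coupling cleanly via the densities $F_y(s)$ is what ultimately produces the factor $(2/n)^{N-1}$.
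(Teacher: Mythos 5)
Your proof is correct and follows essentially the same route as the paper: both partition the vertices into per-node groups of size $n-d(y)$ (parent-shared vertex plus the private vertices of $e(y)$), extract a factor $r^{-(n-d(y))}$ from the coloring constraint and $1/(n-d(y))$ from the first-vertex constraint for each non-root node, then multiply over all nodes, use $n-d(y)>n/2$ (from non-degeneracy) to bound $1/(n-d(y))<2/n$, and pick up the extra $(1-p)^{n/2}$ at the root in the complete case. The only difference is bookkeeping: the paper obtains the product directly by observing that the per-node events are determined by pairwise disjoint sets of variables and hence independent, whereas your conditional recursion $F_y(s)$ with integration $G_y=\int_0^p F_y(s)\,ds$ is a more explicit (but equivalent) way of carrying out the same computation.
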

\begin{proof}
Note that if the maximum degree of $T$ is larger than $n/2$ then $T$ can not be downward complete (since it can not contain degenerate edges).
Let $x$ be the root of $T$ and let $T_1,\ldots,T_k$ be direct subtrees of the root with roots $y_1,\ldots,y_k$.
Suppose that $T$ is downward complete and the dominating color of $e(x)$ is $i$.
Then it is necessary that:
\begin{enumerate}
	\item all trees $T_1,\ldots,T_k$ are downward complete;
	\item for every $j\in [k]$, the dominating color of $e(y_j)$ is $(i-1)_r$;
	\item the vertices of $e(x)$ colored initially with $(i-1)_r$ are exactly the vertices $v(x,y_1),\ldots,v(x,y_k)$;
	\item for every $j\in [k]$, vertex $v(x,y_j)$ is the first vertex of $e(y_j)$ initially colored with $(i-1)_r$.
\end{enumerate}

So, for every node $y$ of a downward complete h-tree, which is not the root, some specific set of $n-d(y)$ vertices of $e(y)$ are initially colored with specific color, in fact, dominating color of $y$, and one specific vertex among these vertices must be the first in the weight order.
This is the unique common vertex of the edge and its parent in the tree.

Moreover, if $i$ is the dominating color of the root $x$ of the tree, then $e(x)$ must have exactly $n-d(x)$ vertices colored initially with $i$.
All these events concern pairwise disjoint sets of vertices so they are all independent.
The probability that they all hold is
\begin{align*}
	 \left(\frac{1}{r}\right)^{n-d(x)} \prod_{y\in T, y\neq x} \frac{1}{(n-d(y)) r^{n-d(y)}}	 \\
	=  r^{-nN} \left(\prod_{y\in T, y\neq x} \frac{1}{n-d(y)} \right) \left( \prod_{y\in T}  r^{d(y)} \right) \\
	= r^{-nN} \left(\prod_{y\in T, y\neq x} \frac{1}{n-d(y)} \right) r^{\sum_{y\in T}d(y)}.
\end{align*}
Since $n-d(x)>n/2$ and the sum of degrees in a tree is the number of its vertices minus 1 we get upper bound
\begin{align*}
	\leq  r^{-nN} \left( \frac{2}{n} \right)^{N-1} r^{N-1}.
\end{align*}
Finally there are $r$ choices for the dominating color of the root so the total probability that the h-tree is downward complete is smaller than $r^{-N(n-1)} \left( \frac{2}{n} \right)^{N-1}$.

The root $x$ of complete h-tree with dominating color $i$ additionally must not contain any free vertices initially colored with $i$. Since $d(x)<n/2$ it must have at least $n/2$ such vertices.
Therefore the probability that $T$ is complete is at most
\[
	(1-p)^{n/2} r^{-(n-1)N} \left( \frac{2}{n} \right)^{N-1}.
\]
\end{proof}

Now we are ready to finish the proof of Theorem \ref{thm:simp}.

\subsection{Proof of Theorem \ref{thm:simp}}
Let $H=(V,E)$ be a simple $n$-uniform hypergraph with maximum edge degree $D\leq \alpha\cdot nr^{n-1}$.
We are going to apply Lemma \ref{lm:local} to prove that for suitably chosen constant $\alpha$ and $p=p(n)$, Algorithm \ref{alg:MGC} succeeds with positive probability, when the input is chosen uniformly at random.
For that reason we specify four families of events to be avoided and analyse their contribution to the local polynomial.

The first kind of event that we want to avoid is that an edge is degenerate and dangerous.
If there are no degenerate dangerous edges, then by Corollary \ref{cor:condT} it is sufficient to avoid
the event that there exists an h-tree which is complete.
For disjoint h-trees, this is exactly the situation that we avoid.
If an h-tree is not disjoint it may still contain large downward complete disjoint subtree.
Such subtrees of size $\geq \log(n)$ are large enough to be avoided and this is the third kind of events that we want to avoid.

\bigskip	
Suppose that a complete h-tree $T$ is not disjoint and does not contain downward complete disjoint subtree of size $\geq \log(n)$.
Let $T'$ be a not disjoint subtree of $T$ with minimal size. 
Clearly, any direct subtree of $T'$ is disjoint and has size $<\log(n)$.
Since $T'$ is not disjoint there exists a shortest path in this tree $x_1,\ldots,x_k$ of length $< 2\log(n)$ for which $e(x_1)$ intersects $e(x_k)$.
In particular, the sequence  $e(x_1), \ldots, e(x_k)$ forms a simple cycle in the hypergraph.

Nodes $x_1, \ldots, x_k$ form a path in a complete h-tree, therefore for every $j\in [k-1]$ the dominating color of $e(x_j)$ differs from the dominating color in $e(x_{j+1})$ by one.
Every cycle with that property will be called \emph{bad}.
Note that a bad cycle has length at least $3$.
Bad cycles are the fourth kind of events that we consider.

\bigskip	
By the above discussion, if there are no degenerate dangerous edges, disjoint complete h-trees, disjoint downward complete h-trees of size $\geq \log(n)$ and bad cycles of length $< 2\log(n)$, then there are no complete h-trees.
Let $\de_f, \ct_T, \dct_S, \ec_C$ be the events that edge $f$ is degenerate, disjoint h-tree $T$ is complete, disjoint h-tree $S$ is downward complete, cycle $C$ is bad.
	
In the following subsections we analyse contribution to the local polynomial for each of these kinds of events.	

\subsubsection{Events $\de$.}
Every vertex belongs to at most $D$ edges and the probability that an edge is degenerate and dangerous is smaller than $r(2/r)^n\binom{n}{n/2} p^{n/2}$.
Indeed, if the edge is dangerous then it is two-colored in the initial coloring and has at least $n/2$ free vertices.
Hence the contribution of this kind of events to the local polynomial is at most
\begin{align*}
	w_\de(z)&= D\, 2^nr^{1-n}\binom{n}{n/2} p^{n/2} z^n.
\end{align*}


\subsubsection{Events $\ct$.}
By Proposition \ref{prop:treesNb} the number of disjoint h-trees of size $N$ in $H$ containing some fixed vertex is at most $(4D)^N$.
By Proposition \ref{prop:disjProb} the probability that such an h-tree is complete is smaller than $(1-p)^{n/2} r^{-(n-1)N} (2/n)^{N-1}$.
Therefore the contribution of disjoint complete trees to the local polynomial is at most:
\begin{align*}
	w_\ct(z)&= \sum_{N \in \Nat_1}
	(4D)^N
	\left( (1-p)^{n/2} r^{-(n-1)N} (2/n)^{N-1}	\right)
	 z^{Nn}\\
	 & = \frac{n (1-p)^{n/2}}{2} \sum_{N \in \Nat_1}
	\left( \frac{8Dz^n}{n \; r^{n-1}}   \right)^N.
\end{align*}
For $D < 16^{-1} z^{-n} n r^{n-1}$, the sum is convergent to the value which is smaller than one.
Then it suffices to set $p\geq 5\log(n)/n$ to get $w_\dct(z) \leq 1/n^{3/2}$.

\subsubsection{Events $\dct$.}
By Proposition \ref{prop:disjProb} the probability that a disjoint h-tree of size $N$ in $H$ is downward complete is smaller than $r^{-(n-1)N} (2/n)^{N-1}$.
Therefore the contribution of disjoint downward complete trees of size at least $\log(n)$ to the local polynomial is at most:
\begin{align*}
	w_\dct(z)&= \sum_{N \geq \log(n)}
	(4D)^N
	\left( r^{-(n-1)N} (2/n)^{N-1}	\right)
	 z^{Nn}\\
	 & = \frac{n}{2}
	\left( \frac{8Dz^n}{n \; r^{n-1}}   \right)^{\lceil \log(n) \rceil}	
	  \sum_{N \in \Nat}
	\left( \frac{8Dz^n}{n \; r^{n-1}}   \right)^N.
\end{align*}
Again, for $D < 16^{-1} z^{-n} n r^{n-1}$, the sum is convergent and bounded by two.
If additionally $D< 8^{-1}e^{-5/2} z^{-n} n r^{n-1}$, then
	 \[
	 	\left( \frac{8Dz^n}{n \; r^{n-1}}   \right)^{\lceil \log(n) \rceil}	
	 	< n^{-5/2},
	 \]
so $w_\dct(z) < 1/n^{3/2}$.
	
\subsubsection{Events $\ec$.}	
The number of cycles of length $N$ containing specific vertex is at most $N D^{N-1} n^2$ (Proposition \ref{prop:cycleNb}).
Let $f_0, \ldots f_{N-1}$ be a bad cycle and let $i_0, \ldots, i_{N-1}$ denote dominating colors of consecutive edges of the cycle.
Let $v_0, \ldots, v_{N-1}$ be such that for $j\in \{0, \ldots, n-1\}$ we have $f_j \cap f_{(j+1)_N}=\{v_j\}$.
For the cycle to be bad, it is necessary that for every $j\in \{0, \ldots, N-1\}$, any vertex of $f_j\setminus \{v_j\}$ either has initial color $i_j$ or has initial color $(i_j-1)_r$ and is free.
That necessary condition holds for every edge $f_0, \ldots, f_{N-1}$ independently with probability $\left((1+p)/r\right)^{n-1}$.
Finally we have $r$ possibilities for the dominating color of $f_1$ and at most two possibilities for dominating colors of each $f_2, \ldots, f_{N-1}$.
Total probability of being bad for a cycle of length $N$ is smaller than
\[
	r 2^{N} \left(\frac{1+p}{r}\right)^{(n-1)N}.
\]
Therefore the contribution to the local polynomial from the bad cycles of length at most $2\log(n)$ does not exceed
\begin{align*}
	w_\ec(z)&= \sum_{ 3 \leq N  <2 \log(n)}
	\left( N D^{N-1} n^2 \right)
	\left( r 2^{N} \left(\frac{1+p}{r}\right)^{(n-1)N} \right)
	 z^{Nn}\\
	 & \leq \frac{4 \log(n) \;n^{2+12 \log(n)} (1+p)^{n-1} z^n}{n^6 r^{n-2} }
	  \sum_{3 \leq N <2 \log(n)}
	\left( \frac{2 (1+p)^{n-1} D z^n}{n^6 \; r^{n-1}}   \right)^{N-1}.
\end{align*}
For $D< \frac{1}{4 z^{n} (1+p)^{n-1}} n^{6} r^{n-1}$ the sum is convergent and bounded by one. Then $w_\ec(z) < \frac{4 \log(n) \;n^{2+12 \log(n)} (1+p)^{n-1} z^n}{n^6 r^{n-2} }$.

\subsubsection{Choice of the parameters}
Let us finish the proof. We set $p=5 \log(n)/n$ and $z_0=1/(1-1/n)$. Then for $D< (2e)^{-3} z_0^{-n} n r ^{n-1}$, we have

\[
	w_\ct(z_0) < 1/n^{3/2}, \:\:\: w_\dct(z_0) < 1/n^{3/2}
\]
and
\begin{align*}
	w_\de(z_0)&= D \frac {2^n}{r^{n-1}}\binom{n}{n/2} \left( \frac{5 \log(n)}{n}\right)^{n/2} z_0^n <
	\left( \frac{80 \log(n)}{n}\right)^{n/2} n.
\end{align*}
The bound for $w_\de(z_0)$ is super-exponentially small in $n$.

\bigskip
We have $(1+p)^{n-1} < n^5$ so our bound for $D$ implies that $D< \frac{1}{4 z_0^{n} (1+p)^{n-1}} n^{6} r^{n-1}$.
Therefore
\begin{align*}
	w_\ec(z_0) < \frac{4 \log(n) \;n^{2+12 \log(n)} (1+p)^{n-1} z_0^n}{n^6 r^{n-2} }
\end{align*}
which is exponentially small in $n$ (recall $r\geq 2$).

Therefore for large enough $n$, values of all these polynomials are bounded by $1/n^{3/2}$ which, by Lemma \ref{lm:local}, implies by  that all events of types $\de, \ct, \dct, \ec$ can be simultaneously avoided.
By Corollary \ref{cor:condT} it implies that for all large enough $n$, $H$ is $r$-colorable.
Note that $z_0^{-n}\sim e^{-1}$, so it is enough to choose $\alpha = (1-\epsilon) 2^{-3} e^{-4}$ for any positive $\epsilon<1$.

We proved that there exists $n_0$ such that for any $n>n_0$, any $r\geq 2$, arbitrary  simple $n$-uniform  hypergraph with maximum edge degree at most $(2e)^{-4} n r^{n-1}$ is $r$-colorable.
Thus, there exists $\alpha>0$ such that, for all $n\geq 3$, any such hypergraph with maximum edge degree at most $\alpha\cdot n r^{n-1}$ is $r$-colorable.
Theorem \ref{thm:simp} is proved.

\section{Van der Waerden numbers}
\label{sec:VdW}

The main aim of this section is to obtain a new lower bound for the Van der Waerden function $W(n,r)$.


For fixed $n$ and $M$, let $H_{(n,M)}$ denote the hypergraph of arithmetic progressions with vertex set $[M]=\{1,\ldots,M\}$ and edge set consisting of all arithmetic progressions of length $n$ contained in $[M]$.
Clearly, $H_{(n,M)}$ is $r$-colorable iff $W(n,r) > M$.

We are going to prove that for appropriately chosen $\beta$ and $M\leq  \beta r^{n-1}$, Algorithm \ref{alg:MGC} colors properly $H_{(n,M)}$ with $r$ colors, with positive probability, when the initial coloring and the weight assignment are chosen uniformly at random.
We use the following simple facts concerning hypergraph $H_{(n,M)}$.

\begin{prop}
\label{prop:arith_prog}
(i) The maximum vertex degree of $H_{(n,M)}$ is at most $M$.

(ii) The 2-codegree of $H_{(n,M)}$ is at most $n^2$, so for every edge $f$ there exists at most $n^4/2$ other edges $f'$ such that $|f\cap f'|\geq 2$.

(iii) For any two vertices $v_1,v_2$ of $H_{(n,M)}$, there are at most $(3n/2)^2 $ pairs of edges $f_1, f_2$ for which $v_1\in f_1, v_2, \in f_2$ and $|f_1 \cap f_2 |> n/2$.
\end{prop}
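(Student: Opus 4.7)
The plan is to verify each of the three statements separately via direct combinatorial counting of arithmetic progressions in $[M]$. For part (i), I would parametrize every AP of length $n$ through a fixed vertex $v$ by its common difference $d$ and the position $i\in\{1,\dots,n\}$ occupied by $v$; the requirement that the AP lies in $[M]$ forces $(i-1)d\le v-1$ and $(n-i)d\le M-v$, hence $(n-1)d\le M-1$. For each admissible $d$ the number of valid positions is bounded simultaneously by $n$ and by roughly $(M-1)/d$, and summing over $d$ yields the bound $M$.

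For part (ii), given two distinct vertices $v_1,v_2$, any AP of length $n$ containing both is uniquely determined by the ordered pair $(i_1,i_2)$ of positions they occupy, because the common difference is forced by $d=(v_2-v_1)/(i_2-i_1)$ and then the starting point is pinned down. There are $n(n-1)<n^2$ such pairs, giving the 2-codegree bound. For the second claim of (ii), each edge $f'\ne f$ with $|f\cap f'|\ge 2$ is selected by some unordered pair $\{u,w\}\subset f$; summing the 2-codegree bound over the $\binom{n}{2}$ pairs of vertices of $f$ yields at most $\binom{n}{2}n^2\le n^4/2$ such edges.

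Part (iii) is the main technical step. The crucial first move is to show that when $|f_1\cap f_2|>n/2$ the common differences of $f_1,f_2$ essentially coincide: the intersection is itself an AP whose difference equals $L:=\operatorname{lcm}(d_1,d_2)$, and fitting its $k>n/2$ points inside $f_i$ with stride $L/d_i$ forces $(k-1)L/d_i\le n-1$, whence $L/d_i<2$, and so $L=d_1=d_2=:d$. Once $d$ is pinned down, the pair is parametrized by $(d,i_1,j)$, where $i_1$ is the position of $v_1$ in $f_1$ and $j$ the position of $v_2$ in $f_2$; the overlap condition $|f_1\cap f_2|>n/2$ reduces to $|q-(j-i_1)|<n/2$ with $q=(v_2-v_1)/d$. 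This simultaneously forces $q$ to be a positive divisor of $v_2-v_1$ smaller than $3n/2$ and localises $(i_1,j)$ to a joint window of comparable size, producing the bound $(3n/2)^2$.

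The main obstacle will be the sharp counting in part (iii): bounding the number of admissible values of $q$ and the number of admissible pairs $(i_1,j)$ independently would only give a weaker $O(n^3)$ estimate, so the efficient argument must exploit the coupling built into $|q-(j-i_1)|<n/2$ rather than treating the divisor constraint and the position constraint as independent. A minor additional subtlety sits at the parity boundary of the common-difference argument --- for $n$ odd and $k=(n+1)/2$ the ratio $L/d_i$ could a priori equal $2$, and this borderline case has to be excluded by a short separate argument before the main counting begins.
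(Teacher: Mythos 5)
Parts (i) and (ii) of your sketch match the paper's one-line arguments. For (iii), the substantive structural observation — that two length-$n$ progressions with more than $n/2$ common points essentially share a common difference, with an $L/d_i=2$ boundary case when $n$ is odd — is correct, and is in fact more careful than the paper's proof, which simply asserts that $f\cup f'$ is a short arithmetic progression; that assertion is false in the odd-$n$ case with $d_2=2d_1$, and contrary to what you write this case cannot be excluded (such pairs genuinely occur) but only counted as a further contribution.

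The genuine gap, though, is the last counting step: you claim the coupling $|q-(j-i_1)|<n/2$ ``produces the bound $(3n/2)^2$,'' but this is only asserted, and in fact no such improvement over your ``naive'' $O(n^3)$ count is available. For each admissible positive divisor $q$ of $v_2-v_1$, the constraint $|q-(j-i_1)|<n/2$ still admits $\Theta(n^2)$ pairs $(i_1,j)\in[n]^2$ when $q$ is small, and if $v_2-v_1$ is divisible by $\mathrm{lcm}(1,\ldots,\lfloor 3n/2\rfloor)$ — perfectly possible once $M$ is large — every $q\in\{1,\ldots,\lfloor 3n/2\rfloor-1\}$ is admissible, giving $\Theta(n^3)$ ordered pairs in total, which already exceeds $(3n/2)^2$ for moderate $n$. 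The same obstacle defeats the paper's one-liner: the positions of $v_1,v_2$ in $f_1\cup f_2$ determine its difference and left endpoint but not its length, and that third degree of freedom costs exactly a factor of $n$. So the proposition as literally stated appears to be false, and neither your sketch nor the paper's proves it; what both implicitly establish is an $O(n^3)$ bound. This is harmless downstream — in the type-2 cycle estimate the quantity is absorbed into a crude $n^8 D^{N-2}$ bound, for which any fixed polynomial in $n$ suffices — but you should replace $(3n/2)^2$ by a correct $O(n^3)$ statement rather than chase the stated constant.
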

\begin{proof}
(i) Every progression containing a fixed vertex $v$ is uniquely defined by a position of the vertex in the progression and by a difference of the progression.

(ii) Every $f$, containing fixed vertices $v$ and $v'$, is uniquely defined by positions of $v$ and $v'$.

(iii) In this case $f$ and $f'$ form a longer arithmetic progression of length at most $3n/2$.

\end{proof}

From now on we denote the maximum edge degree of $H_{(n,M)}$ by $D$ and focus on hypergraphs of arithmetic progressions of maximum edge degree $D$.
It follows from Proposition \ref{prop:arith_prog} that $D<nM$.

\bigskip
Just like in the proof of Theorem \ref{thm:simp} we are going to prove that  with positive probability there are no degenerate dangerous edges, complete disjoint h-trees, downward complete disjoint h-trees of size at most $\log(n)$ and bad cycles of length at most $2 \log(n)$ (with appropriately redefined bad cycles).
The analysis of the first three events is the same as in the case of simple hypergraphs, because it does not need the considered hypergraph to be simple.
In the proof of Theorem \ref{thm:simp} the only events in which we used the simplicity of the hypergraph were bad cycles.
Below we present an alternative analysis of these events which is valid for hypergraphs of arithmetic progressions.

\bigskip
The choice of the parameters in the proof remains the same, i.e. we set
\[
  p=\frac{5 \log(n)}n,\;\; z_0=1/(1-1/n),\;\; \mbox{and consider that }D< (2e)^{-3} z_0^{-n} n r^{n-1}.
\]
In particular we can choose $\beta=(1-\epsilon) 2^{-3} e^{-4}$ for any positive $\epsilon < 1$.

\subsection{Proof of Theorem \ref{thm:VdW} (patch to the proof of Theorem \ref{thm:simp})}

Let $T$ be a complete h-tree which is not disjoint and does not contain disjoint downward complete subtree of size $\geq \log(n)$.
Let $T'$ be its minimal subtree that is not disjoint. Clearly any subtree of $T'$ is disjoint and smaller than $\log(n)$.

Tree $T'$ is not disjoint therefore there exist distinct non-adjacent nodes $x,y$ in $T$ such that $e(x)\cap e(y)\neq \emptyset$ or there exist adjacent nodes $x,y$ with $|e(x)\cap e(y)| \geq 2$.
Let us choose nodes $x,y$ satisfying one of the above condition such that the path in $T'$ from $x$ to $y$ is the shortest. Let $x_1=x, \ldots, x_k=y$ be the consecutive vertices of that path.

In particular case when $k=2$, we have $|e(x)\cap e(y)| \geq 2$, and such pair of edges forms a cycle of length 2.

If $k\geq 3$, then the sequence $(e(x_1),\ldots, e(x_k))$  is a cycle in the hypergraph $H_{(n,M)}$ (i.e. only consecutive edges and $(e(x_1), e(x_k))$ have nonempty intersections).
Then for every $j\in [k-1]$ we have $|e(x_j) \cap e(x_{j+1})|= 1$.
The main difference in comparison with the case of simple hypergraph is that the intersection $e(x_1) \cap e(x_{k})$ can be large.

\bigskip
We redefine \emph{bad} cycle as a cycle $(e_1,\ldots,e_k)$ of length $k\geq 3$ in which every intersection of edges $e_j, e_{j+1}$ has cardinality one, no edge is easy or degenerate and the dominating colors in consecutive edges differs by exactly one $(\mod r)$.

A cycle $f_1, f_2$ of length 2 is called \emph{bad} if $|f_1\cap f_2| \geq 2$, both edges are neither degenerate nor easy and their dominating colors differ by exactly one.
In particular it means that their intersection consists of only free vertices.

\bigskip
As a result if there are no degenerate dangerous edges, but there exists a complete h-tree which is not disjoint and which has no downward complete tree of size $\geq \log(n)$, then there must exist a bad cycle of length smaller than $2 \log(n)$.

Let $\ec$ be the event that some cycle $(e_1, \ldots , e_k)$ of length $k<2 \log(n)$, is bad. Note that if the length of the cycle is at least 3 and the cycle is bad, then it satisfies $|e_j \cap e_{j+1}|=1$, for $j\in [k-1]$.

We consider three types of bad cycles. A bad cycle is of type 0 if its length equals 2.
For $k\geq 3$, the cycle $(e_1, \ldots, e_k)$ is of type 1 if $|e_1 \cap e_k| \leq n/2$ and of type 2 otherwise.

\subsubsection{Events $\ec$ for cycles of type 0}
The probability that two edges $f_1, f_2$ with intersection of size $m\geq 2$ form a bad cycle is at most
\[
	2r \left( \frac{1+p}{r} \right)^{2n-m}.
\]
However, when cycle $(f_1, f_2)$ is bad then all the vertices of the intersection are free.
When $m$ is greater than $n/2$ then both edges are degenerate contradicting our assumption that there is no dangerous degenerate edges. Therefore $m\leq n/2$ and probability that these edges form a bad cycle is at most
\[
	2r \left( \frac{1+p}{r} \right)^{3n/2}.
\]
Every vertex belongs to at most $D$ edges, and for every edge, there are at most $n^4/2$ other edges that intersects that edge in at least two vertices.
Therefore total contribution to the local polynomial from the cycles of type 0 is smaller than
\begin{align*}
	w_{\ec}^0(z)&=
		D n^4	r \left( \frac{1+p}{r} \right)^{3n/2}
		z^{2n}\\
	& < \frac{n^{10} r z^n (1+p)^n}{r^{n/2}} \left( \frac{D z^n (1+p)^n}{n^6r^n} \right).
\end{align*}
For considered parameters, value $\left( \frac{D z_0^n (1+p)^n}{n^6r^n} \right)$ is bounded by constant, while value $\frac{n^{10} r z_0^n (1+p)^n}{r^{n/2}}$ is exponentially small in $n$.
Therefore for any $r\geq 2$ and all large enough $n$, we have $w_{\ec}^0(z_0)< 1/n^2$.

\subsubsection{Events $\ec$ for cycles of type 1}
In hypergraph $H_{(n,M)}$ every two vertices belong to at most $n^2$ common edges.
Therefore the same argument as in the proof of Proposition \ref{prop:cycleNb} can be used to establish that every vertex belongs to at most $N D^{N-1} n^4$ cycles of length $N$.

Let $(e_1, \ldots, e_N)$ be a cycle of type 1.
In a bad cycle all the edges should be dangerous.
For every edge $e_j$, let $e'_j$ be the set of vertices not contained in the previous edges.
Clearly only $e'_N$ can have size smaller than $n-1$, but since the cycle is of type 1 its size is at least $n/2$.

Let $i_j$ denote the dominating color of edge $e_j$.
Then every vertex of $e'_j$ must be either initially colored with $i_j$ or initially colored with $(i_j-1)_r$ and be free.
The probability that it happens (for a fixed choice of dominating colors) is smaller than
\[
	\left(\frac{1+p}{r} \right)^{(n-1)(N-1)+n/2}.
\]
There are at most $r 2^{N-1}$ choices for the sequence of dominating colors.
Therefore the probability that $(e_1, \ldots, e_N)$ is bad is smaller than
\[
	r 2^{N-1} \left(\frac{1+p}{r} \right)^{(n-1)(N-1)+n/2}.
\]
Hence the contribution of these event to the local polynomial is at most
\begin{align*}
	w_{\ec}^1(z)&=
	\sum_{3 \leq N < 2\log(n)}
		(N D^{N-1} n^4 )
		r 2^{N-1} \left(\frac{1+p}{r} \right)^{(n-1)(N-1)+n/2}
		z^{nN}\\
	& <
		\frac{2 \log (n) n^{12\log(n)} (1+p)^{n/2} z^n}{r^{n/2-1}}
		\sum_{3 \leq N < 2\log(n)}
		\left( \frac{2 (1+p)^{n-1}D z^n}{n^6 r^{n-1}}  \right)^{N-1}.		
\end{align*}

For the same parameters as in the proof Theorem \ref{thm:simp} (i.e. $p=5 \log(n)/n$ and $z_0=1/(1-1/n)$), the sum is convergent and bounded by one.
Value
\[
  \frac{2 \log (n) n^{12\log(n)} (1+p)^{n/2} z_0^n}{r^{n/2-1}}
\]
is exponentially small in $n$. In particular, for large enough $n$, value $w_{\ec}^1(z_0)$ is smaller than $1/n^2$.

\subsubsection{Events $\ec$ for cycles of type 2}
Let $(e_1, \ldots, e_N)$ be a cycle of type 2.
Edges $e_1$ and $e_N$ have at least $n/2$ common vertices therefore as arithmetic progressions they must have the same difference.
Any two vertices $v_1,v_2$ have at most $(3n/2)^2 $ pairs of edges $f_1, f_2$ for which $v_1\in f_1, v_2, \in f_2$ and $|f_1 \cap f_2 |> n/2$.

Hence, for any fixed vertex $v$, the number of cycles of type 2 of length $N$ containing $v$ in not the first and not the last edge is at most $(N-2)D^{N-2} n^2 (3n/2)^2$.
Let us count the number of cycles of type 2 of length $N$ containing $v$ in the first edge.
We have at most $D^{N-2}$ choices for the edges $e_1, \ldots, e_{N-2}$.
We have at most $n^4$ choices for $e_N$ since it intersects $e_1$ in more than two points and every two points belong to at most $n^2$ edges.
Finally, the last edge $e_{N-1}$ has to intersect $e_{N-2}$ and $e_N$, so we have at most $n^4$ choices for it.
It gives less than $n^{8} D^{n-2}$ cycles.
Altogether the number of cycles of type 2 of length $N < 2 \log(n)$ containing some specific vertex is smaller than
\[
	N D^{N-2} n^8.
\]
The probability that such a cycle is bad is smaller than the probability that edges $e_1, \ldots, e_{N-1}$ are all dangerous, not degenerate and dominating colors of consecutive edges differ by exactly one $(\mod r)$.
That probability is smaller than
\[
	r 2^{N-2} \left( \frac{(1+p)^{n-1}}{r^{n-1}} \right)^{N-1}.
\]
The contribution to the local polynomial for this type of events is at most
\begin{align*}
	w_{\ec}^2(z)&=
	\sum_{3 \leq N < 2\log(n)}
		(N D^{N-2} n^8 )
		r 2^{N-2} \left( \frac{(1+p)^{n-1}}{r^{n-1}} \right)^{N-1}
		z^{nN}\\
		&=
		\frac{2 \log n\; n^{12 \log(n)} (1+p)^{n-1} z^{2n}}{r^{n-2}}
		\sum_{3 \leq N < 2\log(n)}
		\left( \frac{2 D (1+p)^{n-1}z^n}{n^6r^{n-1}}\right)^{N-2}
\end{align*}
Once again, for the chosen values of parameters, the sum is bounded by one, the function $	\frac{2 \log n\; n^{12 \log(n)} (1+p)^{n-1} z_0^{2n}}{r^{n-2}}$ is exponentially small in $n$.

\bigskip
Let us finish the proof. For large enough $n$, the value $w_{\ec}^0(z_0)+w_{\ec}^1(z_0)+w_{\ec}^2(z_0)$ is smaller than $3/n^2$.
Together with the bounds from the proof of Theorem \ref{thm:simp} this implies that all events of types $\de, \ct, \dct, \ec$ can be simultaneously avoided.
Consequently, for all large enough $n$, hypergraph $H_{(n,M)}$ is $r$-colorable provided $M< (2e)^{-4}r^{n-1}$.
Thus, there exists $\beta>0$ such that, for all $n\geq 3$, $r\geq 2$, we have the lower bound $W(n,r)>\beta r^{n-1}$.
Theorem \ref{thm:VdW} is proved.

\section{Corollaries}\label{sec:corollaries}
In this section we deduce some corollaries from our main results. We start with estimating the number of edges in uniform simple hypergraphs.

\subsection{Number of edges in simple hypergraphs with high chromatic number}

In the above sections we were focused on the maximum edge degree in uniform simple  hypergraphs with high chromatic number. Our main result, Theorem \ref{thm:simp}, states that an $n$-uniform simple non-$r$-colorable hypergraph $H$ satisfies
$$
  \Delta(H)\geqslant \alpha\cdot n\,r^{n-1}.
$$
An immediate corollary of this inequality gives a lower bound for the maximum vertex degree.
\begin{corollary}
\label{cor:maxdeg}
If $H$ is an $n$-uniform simple non-$r$-colorable hypergraph then its maximum vertex degree is at least $\alpha\cdot r^{n-1}$ where $\alpha>0$ is an absolute constant.
\end{corollary}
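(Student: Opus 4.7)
The plan is to derive this corollary almost immediately from Theorem \ref{thm:simp} by exploiting the elementary inequality between maximum edge degree and maximum vertex degree in a uniform hypergraph.

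First, I would observe that in any $n$-uniform hypergraph $H$, if $\Delta_v(H)$ denotes the maximum vertex degree, then
\[
    \Delta(H) \leq n \bigl(\Delta_v(H) - 1\bigr) \leq n \cdot \Delta_v(H).
\]
Indeed, any edge $e$ contains exactly $n$ vertices, and each of those vertices lies in at most $\Delta_v(H) - 1$ edges other than $e$; summing over the $n$ vertices of $e$ gives an upper bound on the number of edges intersecting $e$ (in fact an overcount, but that is fine).

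Next, I would apply Theorem \ref{thm:simp} in its contrapositive form. If $H$ is an $n$-uniform simple hypergraph that is not $r$-colorable, then necessarily
\[
    \Delta(H) > \alpha \cdot n \cdot r^{n-1},
\]
where $\alpha$ is the absolute constant from the theorem.

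Combining the two inequalities yields
\[
    n \cdot \Delta_v(H) \geq \Delta(H) > \alpha \cdot n \cdot r^{n-1},
\]
and dividing both sides by $n$ gives $\Delta_v(H) > \alpha \cdot r^{n-1}$, which is exactly the claim (with the same absolute constant $\alpha$). There is no genuine obstacle here: the argument is a one-line reduction, and the only care required is to note that simplicity is preserved (the hypothesis ``simple'' in Theorem \ref{thm:simp} transfers directly since the same hypergraph is used) and that the factor $n$ from the edge-vertex degree inequality cancels the factor $n$ in the bound of Theorem \ref{thm:simp}, leaving the clean $r^{n-1}$ dependence.
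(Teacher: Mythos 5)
Your proposal is correct and is precisely the argument the paper has in mind: the paper calls the corollary ``immediate'' from Theorem~\ref{thm:simp} because the maximum edge degree of an $n$-uniform hypergraph is at most $n$ times the maximum vertex degree, so the factor of $n$ in the bound of Theorem~\ref{thm:simp} cancels. Your explicit inequality $\Delta(H)\leq n(\Delta_v(H)-1)$ and the contrapositive application of the theorem supply exactly the details left implicit in the paper.
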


\bigskip
Corollary \ref{cor:maxdeg} can be used to derive a lower bound for the number of edges in a simple hypergraph with high chromatic number. This problem was raised by Erd\H{o}s and Lov\'asz in \cite{ErdLov}, they proposed to consider the extremal value $m^*(n,r)$ which is equal to the minimum possible number of edges in a simple $n$-uniform non-$r$-colorable hypergraph. Erd\H{o}s and Lov\'asz themselves proved the following bounds:
\begin{equation}\label{m*(n,r):erdlov}
  \frac{r^{2n-4}}{32n^3}\leqslant m^*(n,r)\leqslant 1600 n^4r^{2n+2}.
\end{equation}
The estimates \eqref{m*(n,r):erdlov} were improved for different relations between the parameters $n$ and $r$ in a lot of papers (see \cite{KostKumb}, \cite{Shab}, \cite{Kozik}, \cite{KR}, \cite{Kost_etal}, \cite{BFM}). The detailed history of improvements can be found, e.g., in \cite{RaigShab}. We give known bounds for $m^*(n,r)$ for fixed $r$ and large $n$.

In \cite{KostKumb} Kostochka and Kumbhat showed that
\begin{equation}\label{m*(n,r):kostkumb}
  m^*(n,r)\geqslant r^{2n-4}n^{-\varepsilon(n)},
\end{equation}
where $\varepsilon(n)>0$ slowly tends to zero for fixed $r$ and $n$ tending to infinity. Better bounds for the infinitesimal function $\varepsilon(n)$ were obtained by Shabanov in \cite{Shab} and by Kozik in \cite{Kozik}. In this paper we refine the bound \eqref{m*(n,r):kostkumb} as follows.

\begin{corollary}
\label{cor:m*(n,r)}
For any $n\geqslant 3$, $r\geqslant 2$,
$$
  m^*(n,r)\geqslant c\cdot r^{2n-4},
$$
where $c>0$ is an absolute constant.
\end{corollary}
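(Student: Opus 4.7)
The plan is to derive the corollary from Corollary~\ref{cor:maxdeg}, which already provides a lower bound of order $r^{n-1}$ on the maximum vertex degree, by combining it with a counting argument that exploits the simplicity of the hypergraph.

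Let $H=(V,E)$ be a simple $n$-uniform hypergraph that is not $r$-colorable, with $m$ edges. By Corollary~\ref{cor:maxdeg}, $H$ contains a vertex $v_0$ of degree $\deg(v_0)\geq \alpha r^{n-1}$. The simplicity of $H$ forces the $\deg(v_0)$ edges through $v_0$ to be pairwise disjoint outside $\{v_0\}$, so their union covers at least $1+(n-1)\alpha r^{n-1}$ vertices; in particular, $|V|\geq (n-1)\alpha r^{n-1}$.

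The key structural observation I would use is that the subhypergraph $H-v_0$, obtained by removing $v_0$ together with every edge through $v_0$, is simple, $n$-uniform, and not $(r-1)$-colorable: any proper $(r-1)$-coloring of $H-v_0$ extends to a proper $r$-coloring of $H$ by giving $v_0$ the unused $r$-th colour, contradicting the non-$r$-colorability of $H$. Iterating this deletion $r-1$ times and applying Corollary~\ref{cor:maxdeg} at each stage (with one fewer colour) produces a sequence $v_0,v_1,\ldots,v_{r-1}$ of distinct vertices of $H$ whose respective stars are pairwise almost disjoint: by simplicity, the stars at $v_i$ and $v_j$ share at most one common edge. A careful accounting of the edges in these stars, combined with the vertex-count lower bound, then aims at the claimed lower bound $m\geq c\,r^{2n-4}$.

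The hardest step is the final accounting. The naive sum-of-stars estimate only gives
\[
	m\;\geq\;\alpha\sum_{s=2}^{r}s^{n-1}\;-\;\binom{r}{2}\;=\;\Theta(r^n/n),
\]
which is strictly weaker than $r^{2n-4}$ when $n\geq 5$. Closing this gap is the main obstacle: one needs to extract a stronger consequence of Corollary~\ref{cor:maxdeg}, for instance by applying it to carefully chosen induced subhypergraphs of $H$ inside $N(v_0)$ to show that a positive fraction of the neighbours of $v_0$ themselves carry degree of order $r^{n-1}$, or by invoking the quantitative simplicity constraint $\sum_{v\in V}\binom{\deg(v)}{2}\leq\binom{m}{2}$ together with Cauchy--Schwarz to amplify the one-vertex degree bound into a global lower bound on the number of edges.
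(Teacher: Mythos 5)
Your starting point (Corollary~\ref{cor:maxdeg}) is correct, and your diagnosis is accurate: the star-deletion strategy, iterated down through $r-1,r-2,\dots$ colours, only yields $m\gtrsim r^n/n$, which is insufficient. You have correctly identified that you are missing an idea, but neither of your two candidate fixes closes the gap. The inequality $\sum_{v}\binom{\deg v}{2}\leq\binom{m}{2}$ combined with a single vertex of degree $\alpha r^{n-1}$ gives only $m\gtrsim r^{n-1}$, which is still far short, and the ``positive fraction of neighbours of $v_0$ have high degree'' route would need a separate argument you do not supply.

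The missing ingredient in the paper is a \emph{trimming} argument that operates on edges rather than on stars. For each edge $e$ of $H$, delete from $e$ a vertex $v(e)$ of maximum degree within $e$. The resulting hypergraph $H'=(V,\{e\setminus\{v(e)\}:e\in E\})$ is $(n-1)$-uniform, still simple (trimming cannot enlarge intersections), and still non-$r$-colorable (a monochromatic $e$ in $H$ would leave $e\setminus\{v(e)\}$ monochromatic in $H'$). Applying Corollary~\ref{cor:maxdeg} to $H'$ yields a vertex $w$ of degree $m\geq\alpha r^{n-2}$ in $H'$. Now lift: let $e_1,\dots,e_m$ be the original edges whose trimmed versions pass through $w$. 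The removed vertices $v(e_1),\dots,v(e_m)$ are pairwise distinct (if $v(e_i)=v(e_j)$ then $e_i,e_j$ would share both $w$ and that vertex, contradicting simplicity), and each $v(e_i)$ has $\deg_H(v(e_i))\geq\deg_H(w)\geq m$ because $v(e_i)$ was chosen to have maximum degree in $e_i\ni w$. Since two distinct vertices lie in at most one common edge, a greedy count over these $m$ vertices gives $|E|\geq\sum_{i=1}^{m}(m-(i-1))\geq m^2/2\geq c\,r^{2n-4}$. The crucial gain over your star-deletion approach is that trimming reduces the uniformity $n$ by one while keeping $r$ fixed, costing only a factor $r$ in the degree bound, and then simplicity multiplies the two degree bounds together instead of summing a telescoping sequence of stars.
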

\begin{proof}The proof is based on a trimming argument which was proposed by Erd\H{o}s and Lov\'asz and developed by Kostochka and Kumbhat. Suppose $H=(V,E)$ is an $n$-uniform simple non-$r$-colorable hypergraph. For every edge $e\in E$, fix a vertex $v(e)$ which has the maximum degree among all the vertices of $e$ (if there a few such vertices then choose one arbitrarily). Consider the following hypergraph $H'=(V,E')$ where
$$
  E'=\{e\setminus \{f(e)\}:\;e\in E\}.
$$
In other words we remove a vertex with maximum degree from every edge (a trimming procedure). Hypergraph $H'$ is $(n-1)$-uniform simple and also non-$r$-colorable. By Corollary \ref{cor:maxdeg} $H'$ contains a vertex $w$ with degree $m\geqslant \alpha\cdot r^{n-2}$. Let $f_1,\ldots,f_m$ denote the edges of $H'$ containing $w$. Consider the restored edges $e_1,\ldots,e_m$ of $H$:
$$
  e_i=f_i\cup v(e_i).
$$
Since the vertex $v(e_i)$ has the maximum degree in $e_i$, its degree in $H$ is at least $m$, the degree of $w$ in $H'$. Moreover, every two vertices of $H$ do not share more than one common edge. Thus, we obtain the following lower bound for the number of edges:
$$
  |E|\geqslant \sum_{i=1}^m(\deg v(e_i)-(i-1))\geqslant \sum_{i=1}^m(m-(i-1))\geqslant \frac{m^2}2\geqslant c\cdot r^{2n-4}.
$$
\end{proof}

For fixed $r$, our new lower bound is just $n^2$ times smaller than the upper bound proved by Kostochka and R\"odl in \cite{KR}. They showed that
$$
  m^*(n,r)\leqslant c_2\,r^{2n}n^{2},
$$
where $c_2=c_2(r)>0$ does not depend on $n$.

\subsection{Choosability in simple hypergraphs}

Finally we comment on list colorings of hypergraphs. Recall that a hypergraph $H=(V,E)$ is said to be $r$-\textit{choosable} if, for every family of sets $L=\{L(v):\; v\in V\}$ ($L$ is called a \textit{list assignment}), such that $\vert L(v)\vert=r$ for all $v\in V$, there is a proper coloring from the lists (for every $v\in V$ we should use a color from $L(v)$). It is clear that $r$-choosability implies $r$-colorability of a hypergraph. Almost all the results discussed in the introduction (except \eqref{bound:cherkozik}) hold also for list colorings, i.e. under the same conditions on the edge degree one can guarantee $r$-choosability of a hypergraph.

\bigskip
The main result of the paper can be easily extended to the case of choosability. The exact formulation is the following.

\begin{theorem}
\label{thm:simp_list}
There exists a positive constant $\alpha$ such for every $r\geq 2$, and every $n\geq 3$, any simple $n$-uniform hypergraph with maximum edge degree at most $\alpha\cdot n r^{n-1}$ is $r$-choosable.
\end{theorem}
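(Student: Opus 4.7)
The plan is to adapt Algorithm \ref{alg:MGC} to accept list inputs and then verify that essentially the same Local Lemma computation from Section \ref{sec:simp} goes through. Given lists $L(v)$ of size $r$, I sample for each vertex $v$ an independent uniform ordered pair $(a(v),b(v))$ of distinct colors from $L(v)$, together with a uniform injective weight $\sigma:V\to[0,1]$. The algorithm starts from $c(v)=a(v)$ and, whenever a monochromatic edge has a free first non-recolored vertex $v$, sets $c(v)\gets b(v)$ (replacing the original $c(v)\gets(c(v)+1)_r$). As before, each vertex is recolored at most once.

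All structural definitions of Section \ref{sec:MGC} carry over with the natural translation. An edge is \emph{dangerous with dominating color $i$} if every non-free $v\in e$ satisfies $a(v)=i$ and every free $v\in e$ satisfies $a(v)=i$ or $b(v)=i$. An h-tree is \emph{alternating} if at every tree edge between a parent $x$ and a child $y$ the common vertex $v$ satisfies $a(v)=i_y$ and $b(v)=i_x$, and every free $w\in e(x)$ with $b(w)=i_x$ equals the common vertex $v(f_j)$ with some direct subtree. Proposition \ref{prop:cc2} and Corollary \ref{cor:condT} then extend verbatim.

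For the probability estimates I would show that each contribution used in Section \ref{sec:simp} is bounded by an expression at most as large as its simple-hypergraph counterpart. For the h-tree events, each non-root node $y$ of a downward complete h-tree contributes $r^{-(n-d(y))}$ for the required $a$-colors plus a factor $(r-1)^{-1}$ for the $b$-color of its parent-connecting vertex; the sum over the $r-1$ admissible dominating colors of $y$ (any color distinct from its parent's) cancels this $(r-1)^{-1}$ exactly and recovers Proposition \ref{prop:disjProb}. For bad cycles, redefined so that each common vertex $v_j$ witnesses the transition via $\{a(v_j),b(v_j)\}=\{i_j,i_{j+1}\}$, the probability that a fixed cycle of length $N$ with a fixed dominating-color sequence is bad is at most
\[
  \Bigl(\tfrac{1+p}{r}\Bigr)^{N(n-2)}\Bigl(\tfrac{2}{r(r-1)}\Bigr)^{N},
\]
and summing over the $r(r-1)^{N-1}$ such sequences yields a bound smaller than the one used in Section \ref{sec:simp} by a factor of $(r-1)(1+p)^N$. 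The degenerate-dangerous-edge bound is identical. Hence every local polynomial satisfies the same inequality as in the proof of Theorem \ref{thm:simp} for the same $p=5\log n/n$ and $z_0=1/(1-1/n)$, and Lemma \ref{lm:local} yields $r$-choosability for the same absolute constant $\alpha$.

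The main obstacle is the bookkeeping of the extra randomness in $b(v)$: in the original setting each color transition in the blame graph is the deterministic shift $+1\bmod r$, whereas in the list setting it is an arbitrary distinct pair tracked by $(a(v),b(v))$. Once one verifies the precise cancellation of the $r-1$ options for each dominating color against the conditional probability $1/(r-1)$ of the required $b$-coordinate, no new ideas are needed and the remainder of the argument is a direct translation of the proof of Theorem \ref{thm:simp}.
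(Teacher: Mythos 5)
Your proposal follows the paper's (sketched) proof exactly: same algorithm with a pre‐sampled recolor color $b(v)\in L(v)\setminus\{a(v)\}$, same event taxonomy ($\de,\ct,\dct,\ec$), same $p$ and $z_0$. Your key observations are also the right ones and are not spelled out in the paper: the $r-1$ choices for each non-root dominating color cancel exactly against the factor $\tfrac{1}{r-1}$ from conditioning $b(v)$, recovering Proposition~\ref{prop:disjProb}; and for bad cycles the naive translation (only "each edge dangerous'', giving $(r-1)^{N-1}$ dominating-color sequences instead of $2^{N-1}$) is not good enough for large $r$, so the stronger constraint that each tree-edge-label cross vertex $v_j$ be free with $\{a(v_j),b(v_j)\}=\{i_j,i_{j+1}\}$ must be used.

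One small imprecision worth fixing: the wrap-around vertex $v_N\in e(x_1)\cap e(x_N)$ is \emph{not} a tree-edge label, so your strong constraint $\{a(v_N),b(v_N)\}=\{i_1,i_N\}$, $v_N$ free is not automatic. If $i_1\ne i_N$ it \emph{is} forced (since $v_N$ lies in two dangerous edges with distinct dominating colors), but if $i_1=i_N$ the constraint is vacuous and $v_N$ satisfies only the per-edge condition of probability $\tfrac{1+p}{r}$, which exceeds $\tfrac{2}{r(r-1)}$ once $r\ge 3$. So your per-sequence bound is not a valid upper bound for sequences with $i_1=i_N$, and your redefined "bad cycle'' is not guaranteed to exist when a non-disjoint complete h-tree produces such a sequence. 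The fix is routine: apply the strong factor $\tfrac{2p}{r(r-1)}$ only to the $N-1$ tree-edge cross vertices and use $\tfrac{1+p}{r}$ for $v_N$; after summing over the $r(r-1)^{N-1}$ sequences the total is still smaller than the cyclic bound by roughly a factor $\bigl(p/(1+p)\bigr)^{N-1}$, and the rest of the argument goes through unchanged with the same constant~$\alpha$.
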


The proof almost repeats the arguments from Sections 3 and 4. The following straightforward randomized analogue of Algorithm \ref{alg:MGC} gives a random coloring from the lists. Let $H=(V,E)$ be a hypergraph and let $L=\{L(v),v\in V\}$ be an arbitrary $r$-uniform list assignment.

\begin{algorithm-hbox}[H]
\caption{coloring from the lists of hypergraph $H=(V,E)$}\label{alg:list}
	\textbf{Input:} $c$ is a coloring from the lists $L$, $\sigma:V \to (0,1]$ injective\\
 	\While{there exists a monochromatic edge whose first non--recolored vertex $v$ is free}{
 		recolor $v$ with a random color from $L(v)\setminus \{c(v)\}$
 	} 	
\Return $c$
\end{algorithm-hbox}

The probabilistic analysis of this algorithm with given random input follows the proof of Theorem \ref{thm:simp} almost without changes.

\bibliographystyle{siam}

\end{document}